\providecommand{\U}[1]{\protect\rule{.1in}{.1in}}
\tikzset{>=Triangle}
\newtheorem{theorem}{Theorem}[section]
\newtheorem{corollary}[theorem]{Corollary}
\newtheorem{remark}[theorem]{Remark}
\newtheorem{lemma}[theorem]{Lemma}
\newtheorem{definition}[theorem]{Definition}
\numberwithin{equation}{section}
\pgfplotsset{compat=1.17}
\begin{document}
\title[General criteria for a stronger notion of lineability]{General criteria for a stronger notion of lineability}
\author[V.V. F\'{a}varo]{Vin\'icius V. F\'{a}varo}
\address{Faculdade de Matem\'{a}tica \\
Universidade Federal de Uberl\^{a}ndia \\
38400-902 - U\-ber\-l\^{a}n\-dia, Brazil.}
\email{vvfavaro@ufu.br}
\author[D.\ Pellegrino]{Daniel Pellegrino}
\address{Departamento de Matem\'{a}tica \\
Universidade Federal da Para\'{\i}ba \\
58.051-900 - Jo\~{a}o Pessoa, Brazil.}
\email{daniel.pellegrino@academico.ufpb.br}
\author[A. Raposo Jr.]{Anselmo Raposo Jr.}
\address{Departamento de Matem\'{a}tica \\
Universidade Federal do Maranh\~{a}o \\
65085-580 - S\~{a}o Lu\'{\i}s, Brazil.}
\email{anselmo.junior@ufma.br}
\author[G. Ribeiro]{Geivison Ribeiro}
\address{Departamento de Matem\'{a}tica \\
Universidade Federal da Para\'{\i}ba \\
58.051-900 - Jo\~{a}o Pessoa, Brazil.}
\email{geivison.ribeiro@academico.ufpb.br}
\thanks{V.V. F\'{a}varo is supported by FAPEMIG Grant PPM-00217-18 and partially
supported by FAPEMIG Grant RED-00133-21}
\thanks{D. Pellegrino is supported by CNPq Grant 307327/2017-5 and Grant 2019/0014
Para\'{\i}ba State Research Foundation (FAPESQ) }
\subjclass[2020]{15A03, 47B37, 47L05, 46B86}
\keywords{Lineability; spaceability}

\begin{abstract}
A subset $A$ of a vector space $X$ is called $\alpha$-lineable whenever $A$
contains, except for the null vector, a subspace of dimension $\alpha$. If $X$
has a topology, then $A$ is $\alpha$-spaceable if such subspace can be chosen
to be closed. The vast existing literature on these topics has shown that
positive results for lineability and spaceability are quite common. Recently,
the stricter notions of $\left(  \alpha,\beta\right)  $%
-lineability/spaceability were introduced as an attempt to shed light to more
subtle issues. In this paper, among other results, we prove some general
criteria for the notion of $\left(  \alpha,\beta\right)  $-spaceability and,
as applications, we extend recent results of different authors.

\end{abstract}
\maketitle

\section{Introduction}

The notions of lineability and spaceability were introduced in the seminal
paper \cite{AGSS} by Aron, Gurariy and Seoane-Sep\'{u}lveda and its essence is
to investigate linear structures within exotic settings. More precisely,
$A\subset V$ is $\alpha$-lineable in a vector space $V$, if $A\cup\left\{
0\right\}  $ contains an $\alpha$-dimensional subspace of $V$. In addition, if
$V$ is endowed with a topology, we say that $A$ is $\alpha$-spaceable in $V$
if there is an $\alpha$-dimensional closed subspace of $V$ contained in
$A\cup\left\{  0\right\}  $.

It turns out that the vast literature related to this subject has shown that
positive results of lineability and spaceability are rather common; we refer
the reader to \cite{aron}. Recently, in \cite{FPT} the notions of $\left(
\alpha,\beta\right)  $-lineability and\textit{ }$\left(  \alpha,\beta\right)
$\textit{-}spaceability were introduced as an attempt to investigate how far
positive results of lineability and spaceability remain valid under stricter assumptions.

From now on $\mathbb{N}$ denotes the set of all positive integers and
$\mathbb{K}$ represents the real scalar field $\mathbb{R}$ or the complex
scalar field $\mathbb{C}$. All vector spaces are considered over $\mathbb{K}$.

Let $V$ be a vector space and let $A$ be a non-void subset of $V$.

\begin{itemize}
\item If $\alpha,\beta$ are cardinal numbers, $\alpha\leq\beta$, and $V$ is a
vector space, then $A$ is called $\left(  \alpha,\beta\right)  $-lineable if
it is $\alpha$-lineable and for each $\alpha$-dimensional subspace $W_{\alpha
}\subset A\cup\left\{  0\right\}  $ there is a $\beta$-dimensional subspace
$W_{\beta}$ such that
\begin{equation}
W_{\alpha}\subset W_{\beta}\subset A\cup\left\{  0\right\}  \text{.}
\label{1.1}%
\end{equation}

\item When $V$ is endowed with a topology and the subspace $W_{\beta}$
satisfying (\ref{1.1}) can always be chosen closed, we say that $A$ is
$\left(  \alpha,\beta\right)  $-spaceable.
\end{itemize}

Note that, in particular, $\left(  \alpha,\beta\right)  $%
-lineability/spaceability implies $\beta$-lineability/spaceability. On the
other hand, while $\left(  \alpha,\alpha\right)  $-lineability and $\alpha
$-lineability are the same concept, $\left(  \alpha,\alpha\right)
$-spaceability does not coincide with $\alpha$-spaceability.

In the same direction, in \cite{Pellegrino} the notion of pointwise $\beta
$-lineability/spaceability was introduced as follows:

\begin{itemize}
\item A subset $A$ of a vector space $V$ is called pointwise $\beta$-lineable
(spaceable) if, for each $x\in A$, there is a (closed) $\beta$-dimensional
subspace $W_{x}$ such that
\[
x\in W_{x}\subset A\cup\left\{  0\right\}  \text{.}%
\]

\end{itemize}

It is plain that the notion of pointwise $\beta$-lineability (spaceability)
implies $\left(  1,\beta\right)  $-lineability (spaceability). However, in
general, the converse is not true (see \cite[Example 2.2]{Pellegrino}).

Up to now there are only few results on the $\left(  \alpha,\beta\right)  $
notions of lineability and\textit{ }spaceability (see \cite{Diogo/Anselmo,
Diogo, Pilar, FPT, Pellegrino}). Most of the results obtained thus far prove
$\left(  1,\mathfrak{c}\right)  $-lineability and $\left(  1,\mathfrak{c}%
\right)  $-spaceability of certain sets, and no general technique relating
lineability/spaceability and $\left(  \alpha,\beta\right)  $%
-lineability/spaceability seems to be known. The main goal of this paper is to
shed light to the interplay between the notions of lineability/spaceability
and $\left(  \alpha,\beta\right)  $-lineability/spaceability, providing
general techniques which are applied in several different settings.

The letters $\alpha,\beta,\lambda$ will always represent cardinal numbers,
$\operatorname*{card}\left(  A\right)  $ denotes the cardinality of the set
$A$, $\aleph_{0}:=\operatorname*{card}\left(  \mathbb{N}\right)  $ and
$\mathfrak{c}:=\operatorname*{card}\left(  \mathbb{R}\right)  $.

The paper is organized as follows. In Section \ref{Sec2} we prove a general
result on $\left(  \alpha,\beta\right)  $-spaceability and, as a consequence,
we conclude that the sets $L_{p}\left[  0,1\right]  \setminus\bigcup
\nolimits_{q\in\left(  p,\infty\right)  }L_{q}\left[  0,1\right]  $ and
$\mathcal{ND}\left[  0,1\right]  $ of all continuous nowhere differentiable
functions from $\left[  0,1\right]  $ to $\mathbb{R}$ are not $\left(
\alpha,\beta\right)  $-spaceable whenever $\alpha\geq\aleph_{0}$. These
results complement previous results of Botelho et al. \cite{BFPS} and Gurariy
\cite{FGK}. In Section \ref{Sec3}, we develop a general technique that allows
us to conclude, for instance, that $\ell_{\infty}\setminus c_{0}$ and
$\ell_{\infty}\setminus c$ are $\left(  \alpha,\mathfrak{c}\right)
$-spaceable if, and only if, $\alpha<\aleph_{0}$. Sections \ref{Sec4} and
\ref{Sec5} are devoted to the investigation of dense lineability. We prove
general results which, in particular, yields that $L_{p}[0,1]\setminus
\bigcup\nolimits_{q\in\left(  p,\infty\right)  }L_{q}[0,1]$ is pointwise
maximal dense-lineable and $\left(  n,\aleph_{0}\right)  $-dense lineable, for
each $n\in\mathbb{N}$.

\section{$\left(  \alpha,\beta\right)  $-spaceable sets: negative
results\label{Sec2}}

We start off recalling that (see \cite[Definition 2.1]{Aron2009}) if $A$, $B$
are subsets of a vector space $V$, then $A$ is called stronger than $B$
whenever $A+B\subset A$. We also recall that an $F$-space is a topological
vector space whose topology can be defined by a complete translation-invariant metric.

The main result of this section reads as follows:

\begin{theorem}
\label{Teo2.1}Let $\alpha\geq\aleph_{0}$ and $V$ be an $F$-space. Let $A$, $B$
be subsets of $V$ such that $A$ is $\alpha$-lineable and $B$ is $1$-lineable.
If $A\cap B=\varnothing$ and $A$ is stronger than $B$, then $A$ is not
$\left(  \alpha,\beta\right)  $-spaceable, regardless of the cardinal number
$\beta$.
\end{theorem}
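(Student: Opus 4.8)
The plan is to refute $\left(\alpha,\beta\right)$-spaceability by exhibiting a single bad $\alpha$-dimensional subspace. Since $\left(\alpha,\beta\right)$-spaceability requires that \emph{every} $\alpha$-dimensional subspace contained in $A\cup\{0\}$ be enlargeable to a closed subspace lying in $A\cup\{0\}$, it suffices to construct one $\alpha$-dimensional subspace $W\subset A\cup\{0\}$ whose closure $\overline{W}$ escapes $A\cup\{0\}$. Indeed, any closed $W_{\beta}$ with $W\subset W_{\beta}\subset A\cup\{0\}$ would satisfy $\overline{W}\subset W_{\beta}\subset A\cup\{0\}$, so producing a point of $\overline{W}\setminus\left(A\cup\{0\}\right)$ rules out all such $W_{\beta}$ simultaneously, independently of $\beta$.

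First I would record the elementary consequences of the hypotheses. From $1$-lineability of $B$ fix $b\neq0$ with $\mathbb{K}b\subset B\cup\{0\}$, so that $\lambda b\in B$ for every $\lambda\neq0$. Because $A$ is stronger than $B$, i.e. $A+B\subset A$, it follows that $a+\lambda b\in A$ for every $a\in A$ and every scalar $\lambda$. Moreover $b\notin A\cup\{0\}$, since $b\neq0$ and $A\cap B=\varnothing$; this is precisely the point I want to trap in the closure. From $\alpha$-lineability fix an $\alpha$-dimensional subspace $U\subset A\cup\{0\}$ with Hamel basis $\{u_{i}\}_{i\in I}$, $|I|=\alpha\geq\aleph_{0}$; choose a countably infinite $I_{0}\subset I$, relabel its vectors as $u_{1},u_{2},\dots$, and put $I_{1}=I\setminus I_{0}$.

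The key step uses the $F$-space structure: since scalar multiplication is continuous, for each $n$ I can select a scalar $t_{n}\neq0$ so small that $v_{n}:=t_{n}u_{n}$ satisfies $d(v_{n},0)<1/n$, whence $v_{n}\to0$ while $v_{n}\in U$ stays nonzero and $\{v_{n}\}\cup\{u_{i}\}_{i\in I_{1}}$ remains linearly independent. I then set
\[
W:=\operatorname{span}\left(\{b+v_{n}:n\in\mathbb{N}\}\cup\{u_{i}:i\in I_{1}\}\right).
\]
Two verifications remain. For the dimension, any vanishing finite combination yields $sb+p=0$ with $p\in U$ and $s$ the sum of the coefficients of the vectors $b+v_{n}$; since $b\notin U$ this forces $s=0$ and then $p=0$, so the generators are independent and their number is $\aleph_{0}+|I_{1}|=\alpha$. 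For the inclusion $W\subset A\cup\{0\}$, a nonzero $w\in W$ has the form $w=sb+p$ with $p=\sum_{n}c_{n}v_{n}+\sum_{i\in I_{1}}d_{i}u_{i}\in U$ and $s=\sum_{n}c_{n}$; independence gives $p=0\Rightarrow w=0$, so for $w\neq0$ we have $p\neq0$, hence $p\in A$, and therefore $w=p+sb\in A$ by the stronger-than relation. Finally $b+v_{n}\in W$ and $b+v_{n}\to b$, so $b\in\overline{W}\setminus\left(A\cup\{0\}\right)$, which is exactly the obstruction isolated in the first paragraph.

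The hard part will be bookkeeping rather than conceptual: one must simultaneously guarantee that $W$ attains the full dimension $\alpha$, that every nonzero vector of $W$ genuinely lands in $A$ (this is where disjointness $A\cap B=\varnothing$ is essential, to exclude the degenerate vectors $sb$ with $p=0$), and that the rescaled $v_{n}$ both tend to $0$ and preserve linear independence. The cardinal identity $\aleph_{0}+|I_{1}|=\alpha$, valid precisely because $\alpha\geq\aleph_{0}$, together with the continuity of scalar multiplication in an $F$-space, are the two facts that make the construction go through.
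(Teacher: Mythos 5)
Your proposal is correct and follows essentially the same strategy as the paper's proof: translate (rescaled) basis vectors of an $\alpha$-dimensional subspace inside $A$ by a fixed nonzero vector $b$ from the line in $B$, use $A+B\subset A$ and $A\cap B=\varnothing$ to keep the span inside $A\cup\{0\}$ with dimension $\alpha$, and use the shrinking perturbations $v_{n}\to0$ to force $b\in\overline{W}\setminus\left(A\cup\{0\}\right)$, which blocks any closed enlargement. The only (immaterial) difference is that the paper adds the vector from $B$ to \emph{all} basis vectors rather than only to the countably many rescaled ones.
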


\begin{proof}
Consider a complete translation-invariant metric $d$ on $V$ whose topology is
the one of $V$. Since $B$ is $1$-lineable, let $v\in V\setminus\left\{
0\right\}  $ be such that
\[
\mathbb{K}v\setminus\left\{  0\right\}  \subset B\text{.}%
\]
Since $A$ is $\alpha$-lineable, let $\Gamma$ be a set with cardinality
$\alpha$ and $\left\{  v_{a}:a\in\Gamma\right\}  \subset V$ be a set of
linearly independent vectors such that
\[
E\setminus\left\{  0\right\}  \subset A\text{,}%
\]
where $E=\operatorname*{span}\left\{  v_{a}:a\in\Gamma\right\}  $. Let
$\left\{  a_{m}:m\in\mathbb{N}\right\}  \subset\Gamma$ be an infinite
countable set. Given $n\in\mathbb{N}$, the continuity of scalar multiplication
on $s\mapsto sv_{a_{n}}$ yields that there exists $\delta_{n}>0$ such that%
\[
d\left(  sv_{a_{n}},0v_{a_{n}}\right)  <n^{-1}\text{, for all }s\in\left(
0,\delta_{n}\right)  \text{. }%
\]
In particular, for $s=2^{-1}\delta_{n}$, we have
\[
d\left(  2^{-1}\delta_{n}v_{a_{n}},0\right)  <n^{-1}\text{.}%
\]
Given $b\in\Gamma$, define $u_{b}:=t_{b}v_{b}+v$, where%
\[
t_{b}=\left\{
\begin{array}
[c]{ll}%
2^{-1}\delta_{n}\text{,} & \text{if }b=a_{n}\in\left\{  a_{m}\right\}
_{m=1}^{\infty}\text{ for some }n\text{,}\vspace{0.2cm}\\
1\text{,} & \text{otherwise.}%
\end{array}
\right.
\]
Since $A+B\subset A$, it follows that $u_{b}\in A$, for each $b\in\Gamma$.
Since $A\cap B=\varnothing$, it is clear that $\left\{  u_{b}:b\in
\Gamma\right\}  $ is linearly independent. Defining
\[
M:=\operatorname*{span}\left\{  u_{b}:b\in\Gamma\right\}  \text{,}%
\]
let us prove that $M\setminus\left\{  0\right\}  \subset A$. For $f\in
M\setminus\left\{  0\right\}  $, there are $N\in\mathbb{N}$ and $\left(
c_{1},\ldots,c_{N}\right)  \neq(0,\ldots,0)$ in $\mathbb{K}^{N}$\ such that
\[
f=%
{\textstyle\sum\limits_{j=1}^{N}}
c_{j}t_{b_{j}}v_{_{bj}}+%
{\textstyle\sum\limits_{j=1}^{N}}
c_{j}v\text{.}%
\]
Since $\left\{  v_{b}:b\in\Gamma\right\}  $ is linearly independent and
$\left(  c_{1},\ldots,c_{N}\right)  \neq(0,\ldots,0)$, we have
\[%
{\textstyle\sum\limits_{j=1}^{N}}
c_{j}t_{b_{j}}v_{_{bj}}\in E\setminus\left\{  0\right\}  \subset A\text{,}%
\]
and since
\[%
{\textstyle\sum\limits_{j=1}^{N}}
c_{j}v\in\mathbb{K}v\subset B\cup\left\{  0\right\}  \text{,}%
\]
we get
\[
f\in A+\left(  B\cup\left\{  0\right\}  \right)  \subset A\text{.}%
\]
Hence $M\setminus\left\{  0\right\}  \subset A$. Given $\varepsilon>0$, let
$n\in\mathbb{N}$ be such that $n^{-1}<$ $\varepsilon$. Since $d$ is a
translation-invariant metric, we conclude that
\[
d\left(  u_{a_{n}},v\right)  =d\left(  2^{-1}\delta_{n}v_{a_{n}},0\right)
<n^{-1}<\varepsilon\text{.}%
\]
This implies that $v\in\overline{M}$. Since $v\notin A$, it follows that
\[
\overline{M}\not \subset A\cup\left\{  0\right\}
\]
and this means that $A$ is not $\left(  \alpha,\beta\right)  $-spaceable,
regardless of the $\beta\geq$ $\alpha$.
\end{proof}

From \cite{FGK} we know that the set $\mathcal{ND}\left[  0,1\right]  $ of
continuous nowhere differentiable functions $f\colon\left[  0,1\right]
\rightarrow\mathbb{R}$ is $\mathfrak{c}$-spaceable in $C\left[  0,1\right]  $.
The following consequence of the previous result shows that it is not $\left(
\alpha,\mathfrak{c}\right)  $-spaceable regardless of the $\alpha\geq
\aleph_{0}$:

\begin{corollary}
Let $\alpha\geq\aleph_{0}$ and $\beta$ be a cardinal number. The set
$\mathcal{ND}\left[  0,1\right]  $ is not $\left(  \alpha,\beta\right)  $-spaceable.
\end{corollary}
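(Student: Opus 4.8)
The plan is to deduce this as an immediate application of Theorem \ref{Teo2.1}, taking the ambient $F$-space to be $V=C[0,1]$ (a Banach space, hence an $F$-space) and $A=\mathcal{ND}[0,1]$. To invoke the theorem I must supply a companion set $B$ and check its three hypotheses: that $A$ is $\alpha$-lineable, that $B$ is $1$-lineable with $A\cap B=\varnothing$, and that $A$ is stronger than $B$, i.e.\ $A+B\subset A$.

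First I would settle the lineability of $A$. By \cite{FGK} the set $\mathcal{ND}[0,1]$ is $\mathfrak{c}$-spaceable, so it contains, up to the null vector, a $\mathfrak{c}$-dimensional subspace; restricting to any $\alpha$-dimensional subspace of this one witnesses $\alpha$-lineability of $A$ for every $\aleph_{0}\leq\alpha\leq\mathfrak{c}$. Since $\dim C[0,1]=\mathfrak{c}$, the range $\alpha>\mathfrak{c}$ is handled trivially: no $\alpha$-dimensional subspace exists at all, so $A$ is not $\alpha$-lineable and \emph{a fortiori} not $(\alpha,\beta)$-spaceable. Thus it is enough to treat $\aleph_{0}\leq\alpha\leq\mathfrak{c}$.

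Next I would construct $B$. The guiding observation is that the sum of a nowhere differentiable function and an everywhere differentiable one is again nowhere differentiable: if $f+g$ were differentiable at some $x_{0}$ while $g$ is differentiable at $x_{0}$, then $f=(f+g)-g$ would be differentiable at $x_{0}$, contradicting $f\in\mathcal{ND}[0,1]$. Accordingly I set $v(x)=x$ and take $B=\mathbb{K}v\setminus\{0\}$. Then $B$ is $1$-lineable by construction; each of its elements is a linear (hence everywhere differentiable) function, so $A\cap B=\varnothing$; and the observation above gives $f+g\in\mathcal{ND}[0,1]$ for every $f\in A$ and $g\in B$, that is, $A+B\subset A$.

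With these verifications in hand, Theorem \ref{Teo2.1} applies directly and yields that $A=\mathcal{ND}[0,1]$ is not $(\alpha,\beta)$-spaceable for $\aleph_{0}\leq\alpha\leq\mathfrak{c}$ and any $\beta$; together with the vacuous case $\alpha>\mathfrak{c}$ this covers all $\alpha\geq\aleph_{0}$. The only step carrying any content is the verification that $A$ is stronger than $B$, and even that collapses to the elementary differentiability algebra displayed above; the genuine point of the argument is simply the choice of $B$ as a line of smooth functions, engineered to be simultaneously disjoint from $A$ and absorbed by $A$ under addition, so that Theorem \ref{Teo2.1} can be switched on.
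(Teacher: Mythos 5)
Your proof is correct and follows essentially the same route as the paper: both invoke Theorem \ref{Teo2.1} with $A=\mathcal{ND}[0,1]$ and a companion set $B$ of differentiable functions (the paper takes all differentiable functions in $C[0,1]$, you take the single line $\mathbb{K}v$ with $v(x)=x$, an immaterial difference), verifying disjointness and $A+B\subset A$ via the same observation that a nowhere differentiable function plus a differentiable one is nowhere differentiable, and disposing of $\alpha>\mathfrak{c}$ trivially.
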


\begin{proof}
Consider $A=\mathcal{ND}\left[  0,1\right]  $ and $B=\left\{  f\in C\left[
0,1\right]  :f\text{ is differentiable}\right\}  $. Note that
\[
A\cap B=\varnothing\text{ and }A+B\subset A\text{.}%
\]
If $\aleph_{0}\leq\alpha\leq\mathfrak{c,}$ since $A$ is $\alpha$-lineable, the
result follows from Theorem \ref{Teo2.1}. If $\alpha>\mathfrak{c}$ the result
is immediate.
\end{proof}

\begin{corollary}
\label{Cor2.3}Let $\alpha\geq\aleph_{0}$ and $\beta$ be a cardinal number. Let
$X$ be a Banach space or $p$-Banach space $\left(  p>0\right)  $ and $Y$ be a
non-trivial subspace of $X$. If $X\setminus Y$ is $\alpha$-lineable then
$X\setminus Y$ is not $\left(  \alpha,\beta\right)  $-spaceable.
\end{corollary}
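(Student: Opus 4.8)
The plan is to deduce this directly from Theorem \ref{Teo2.1} by choosing the auxiliary set $B$ appropriately. First I would set $A := X\setminus Y$ and $B := Y\setminus\left\{ 0\right\}$, and note that in either case $X$ is an $F$-space: a Banach space is one classically, and a $p$-Banach space carries the complete translation-invariant metric $d(x,y)=\left\Vert x-y\right\Vert ^{p}$ induced by its $p$-norm. This places us in the setting of Theorem \ref{Teo2.1} with $V=X$.

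Next I would check the four hypotheses. The $\alpha$-lineability of $A$ is exactly the standing assumption. Since $Y$ is non-trivial, I would fix some $v\in Y\setminus\left\{ 0\right\}$; then $\mathbb{K}v\setminus\left\{ 0\right\}\subset Y\setminus\left\{ 0\right\}=B$, so $B$ is $1$-lineable. The disjointness $A\cap B=\varnothing$ is immediate, because $A\subset X\setminus Y$ while $B\subset Y$.

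The only step calling for a short argument is that $A$ is stronger than $B$, i.e. $A+B\subset A$. For this I would take $a\in A=X\setminus Y$ and $b\in B\subset Y$ and argue by contradiction: if $a+b\in Y$, then, $Y$ being a subspace, $a=(a+b)-b\in Y$, contradicting $a\notin Y$; hence $a+b\in X\setminus Y=A$. With all hypotheses verified, Theorem \ref{Teo2.1} yields at once that $A=X\setminus Y$ is not $\left( \alpha,\beta\right)$-spaceable, regardless of $\beta$. I do not foresee any genuine obstacle here, as the entire substance resides in Theorem \ref{Teo2.1}; the corollary amounts to recognizing $Y\setminus\left\{ 0\right\}$ as the correct companion set and checking the closure property $A+B\subset A$.
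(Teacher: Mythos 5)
Your proposal is correct and matches the paper's own proof, which simply sets $A=X\setminus Y$ and $B=Y$ and invokes Theorem \ref{Teo2.1}; your choice $B=Y\setminus\left\{0\right\}$ instead of $B=Y$ is an immaterial variation, since both are $1$-lineable, disjoint from $A$, and satisfy $A+B\subset A$. The extra details you supply (the $F$-space structure on a $p$-Banach space via $d(x,y)=\left\Vert x-y\right\Vert^{p}$, and the subtraction argument for $A+B\subset A$) are exactly the routine verifications the paper leaves implicit.
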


\begin{proof}
Considering $A=X\setminus Y$ and $B=Y$, we have $A+B\subset A$ and $A\cap
B=\varnothing$ and the result follows by Theorem \ref{Teo2.1}.
\end{proof}

It is well known (see \cite{BFPS}) that $L_{p}\left[  0,1\right]
\setminus\bigcup\nolimits_{q\in\left(  p,\infty\right)  }L_{q}\left[
0,1\right]  $, for $p>0$, is $\mathfrak{c}$-spaceable in $L_{p}\left[
0,1\right]  $. Considering $X=L_{p}[0,1]$ and $Y=%
{\textstyle\bigcup\nolimits_{q\in(p,\infty)}}
L_{q}[0,1]$ in Corollary \ref{Cor2.3}, since $X\setminus Y$ is $\alpha
$-lineable for all $\aleph_{0}\leq\alpha\leq\mathfrak{c}$ and $\dim
X=\mathfrak{c}$, we have:

\begin{corollary}
Let $\alpha\geq\aleph_{0}$ and $\beta$ be a cardinal number. For $p>0$, the
set $L_{p}[0,1]\setminus%
{\textstyle\bigcup\nolimits_{q\in(p,\infty)}}
L_{q}[0,1]$ is not $\left(  \alpha,\beta\right)  $-spaceable.
\end{corollary}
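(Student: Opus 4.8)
The plan is to deduce this directly from Corollary~\ref{Cor2.3} applied with $X=L_{p}[0,1]$ and $Y=\bigcup_{q\in(p,\infty)}L_{q}[0,1]$, so that the work reduces to checking the hypotheses of that corollary in this concrete setting. First I would record that the ambient space is of the admissible type: $L_{p}[0,1]$ is a Banach space when $p\geq 1$ and a $p$-Banach space (with the $p$-homogeneous quasinorm $f\mapsto\int_{0}^{1}|f|^{p}$) when $0<p<1$, so $X$ falls under the scope of Corollary~\ref{Cor2.3} for every $p>0$.

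Next I would verify that $Y$ is a non-trivial subspace of $X$. Closure under scalar multiplication and $0\in Y$ are immediate, and non-triviality is clear since the constant functions lie in $Y$; the only point needing an argument is closure under addition. For this I would invoke the nesting of $L^{r}$-spaces over the finite measure space $[0,1]$, namely $L_{s}[0,1]\subset L_{r}[0,1]$ whenever $r\leq s$. Thus, given $f\in L_{q_{1}}[0,1]$ and $g\in L_{q_{2}}[0,1]$ with $p<q_{1}\leq q_{2}$, one has $g\in L_{q_{1}}[0,1]$ as well, whence $f+g\in L_{q_{1}}[0,1]\subset Y$. Consequently $Y$ is a subspace, and it is proper since $X\setminus Y\neq\varnothing$ by the result quoted below.

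It remains to supply the lineability input and dispose of the large cardinals. From \cite{BFPS} the set $X\setminus Y$ is $\mathfrak{c}$-spaceable in $L_{p}[0,1]$, hence in particular $\mathfrak{c}$-lineable; since any vector space of dimension $\mathfrak{c}$ possesses subspaces of each dimension $\alpha$ with $\aleph_{0}\leq\alpha\leq\mathfrak{c}$, the set $X\setminus Y$ is $\alpha$-lineable for every such $\alpha$, and Corollary~\ref{Cor2.3} then yields that $X\setminus Y$ is not $(\alpha,\beta)$-spaceable. Finally, for $\alpha>\mathfrak{c}$ I would use $\dim L_{p}[0,1]=\mathfrak{c}$: no subspace of $X$ has dimension $\alpha$, so $X\setminus Y$ cannot even be $\alpha$-lineable and is therefore trivially not $(\alpha,\beta)$-spaceable. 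Together the two ranges cover all $\alpha\geq\aleph_{0}$.

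The argument is almost entirely formal once Corollary~\ref{Cor2.3} and the spaceability result of \cite{BFPS} are available, so I do not expect a serious obstacle. The one genuinely non-formal verification is that $Y$ is closed under addition, and I anticipate that the inclusions $L_{s}[0,1]\subset L_{r}[0,1]$ for $r\leq s$ on the finite interval are precisely the tool that settles it.
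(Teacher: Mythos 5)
Your proof is correct and takes essentially the same route as the paper: apply Corollary~\ref{Cor2.3} with $X=L_{p}[0,1]$ and $Y=\bigcup_{q\in(p,\infty)}L_{q}[0,1]$, use the $\mathfrak{c}$-spaceability from \cite{BFPS} to obtain $\alpha$-lineability for all $\aleph_{0}\leq\alpha\leq\mathfrak{c}$, and settle $\alpha>\mathfrak{c}$ via $\dim L_{p}[0,1]=\mathfrak{c}$. The only difference is that you explicitly verify the hypotheses the paper treats as known, namely that $L_{p}[0,1]$ is a Banach or $p$-Banach space and that $Y$ is a non-trivial subspace via the inclusions $L_{s}[0,1]\subset L_{r}[0,1]$ for $r\leq s$.
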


\begin{remark}
Observing the proof of Theorem \ref{Teo2.1}, we are led to believe that a
(natural) variant of the definition of $\left(  \alpha,\beta\right)
$-spaceability, demanding $W_{\alpha}$ to be closed, would provide a
completely different kind of results. We think that this is worth of further
investigation in the future.
\end{remark}

\section{$\left(  \alpha,\beta\right)  $-spaceable sets: positive
results\label{Sec3}}

A classical result due to Wilansky and Kalton (see \cite[Theorem 2.2]{Kitson})
says that if $W$ is a closed subspace of a Fr\'{e}chet space $V$, then
$V\setminus W$ is spaceable\emph{ }if, and only if, $\dim V/W=\infty$:

\begin{theorem}
\emph{(}\cite[Theorem 2.2]{Kitson}\emph{)} If $W$ is a closed vector subspace
of a Fr\'{e}chet space $V$, then $V\setminus W$ is spaceable if, and only if,
$W$ has infinite codimension.
\end{theorem}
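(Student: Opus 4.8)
The plan is to prove the Wilansky--Kalton theorem in both directions, with the substantive content lying in the ``only if'' implication.

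\medskip

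\noindent\textbf{The easy direction.} First I would dispose of the implication that infinite codimension of $W$ implies spaceability of $V\setminus W$. If $\dim V/W=\infty$, then I would produce a countable linearly independent family $\{x_n\}_{n\in\mathbb{N}}$ in $V$ whose images $\{x_n+W\}$ in the quotient $V/W$ remain linearly independent. The closed span $M:=\overline{\operatorname*{span}}\{x_n:n\in\mathbb{N}\}$ is an infinite-dimensional closed subspace, and the goal would be to arrange that $M\setminus\{0\}\subset V\setminus W$, i.e.\ that $M\cap W=\{0\}$. The selection of the $x_n$ can be done inductively so that the partial sums stay a definite distance from $W$; the completeness of $V$ (as a Fr\'{e}chet space) then guarantees that limits of convergent series $\sum c_n x_n$ do not fall into $W$ unless all coefficients vanish.

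\medskip

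\noindent\textbf{The hard direction.} The contrapositive of the ``if'' part is the real content: if $W$ has finite codimension, then $V\setminus W$ is \emph{not} spaceable. Here I would argue that any infinite-dimensional subspace $M\subset V$ must intersect $W$ in a nonzero vector. Indeed, if $\operatorname*{codim}W=k<\infty$, then the quotient map $\pi\colon V\to V/W$ has a finite-dimensional image of dimension $k$, so the restriction $\pi|_M$ cannot be injective once $\dim M>k$; its kernel $M\cap W$ is therefore nonzero. This shows that no infinite-dimensional subspace (closed or not) can be contained in $(V\setminus W)\cup\{0\}$, so $V\setminus W$ fails to be spaceable.

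\medskip

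\noindent The main obstacle is the inductive construction in the easy direction: one must choose the $x_n$ so that the closed span genuinely avoids $W$ off the origin, which requires controlling both linear independence modulo $W$ and the metric distance of tail sums to $W$ simultaneously. A clean way to handle this is to pass to the quotient $V/W$, which is again a Fr\'{e}chet space (since $W$ is closed), observe that $V/W$ is infinite-dimensional, build a closed infinite-dimensional subspace $N$ of $V/W$ avoiding $0$ off the origin via a basic-sequence argument, and then lift $N$ back through $\pi$ using the open mapping theorem to obtain the desired closed subspace $M$ of $V$ with $M\cap W=\{0\}$. Since this is the classical theorem of Wilansky and Kalton, I would simply cite \cite[Theorem 2.2]{Kitson} for the full details rather than reproduce the construction.
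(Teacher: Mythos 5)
The first thing to note is that the paper does not prove this statement at all: it is quoted as a classical theorem of Wilansky and Kalton, and the only ``proof'' the paper offers is the citation to \cite[Theorem 2.2]{Kitson}. Since you also defer to that same citation for the substantive direction, your proposal is, at the level of logical structure, consistent with what the paper does. Moreover, your self-contained argument for the finite-codimension direction is correct and complete: if $\operatorname{codim}W=k<\infty$, then for any subspace $M$ with $\dim M>k$ the restriction of the quotient map $\pi\colon V\rightarrow V/W$ to $M$ has nonzero kernel $M\cap W$, so no infinite-dimensional subspace, closed or not, can sit inside $\left(  V\setminus W\right)  \cup\left\{  0\right\}  $. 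Be aware, though, that your labels are inverted: this quotient-map argument is the trivial direction, not ``the real content''; the entire substance of the Wilansky--Kalton theorem is the construction you label as the ``easy direction.''

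On that construction, the ``clean way'' you sketch would not survive being written out. Lifting a basic sequence $\left(  z_{n}\right)  _{n=1}^{\infty}$ of $V/W$ to vectors $x_{n}\in V$ (which openness of $\pi$ does permit, with metric control) does not by itself give $\overline{\operatorname*{span}}\left\{  x_{n}:n\in\mathbb{N}\right\}  \cap W=\left\{  0\right\}  $: an element of the closed span is a limit of finite linear combinations, not automatically a convergent series $\sum_{n}c_{n}x_{n}$, so you cannot apply $\pi$ termwise and invoke basicness of $\left(  z_{n}\right)  _{n=1}^{\infty}$ unless you already know the lifted sequence is basic \emph{in} $V$. A concrete failure: take $V=\ell_{2}\oplus\ell_{2}$, $W=\left\{  0\right\}  \oplus\ell_{2}$, $z_{n}=e_{n}$, and lift to $x_{n}=\left(  e_{n},f\right)  $ for a fixed unit vector $f$; then $\frac{1}{k}\sum_{n=1}^{k}x_{n}\rightarrow\left(  0,f\right)  \in W\setminus\left\{  0\right\}  $, so the closed span of the lifts meets $W$ even though $\left(  \pi\left(  x_{n}\right)  \right)  _{n=1}^{\infty}$ is basic. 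Arranging that the lifted sequence is basic in $V$ is exactly the hard point, and it is what Kalton's perturbation machinery is for --- compare Lemma \ref{Lema3.5} (\cite[Lemma 4.3]{Kalton}) and the proof of Theorem \ref{Teo3.6} in this paper, where a regular basic sequence plus summable perturbations is needed precisely to control such limits. Since you explicitly fall back on the citation for this direction, your proposal as a whole has no logical gap, but the sketch should not be mistaken for a proof.
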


In this section, in some sense, we complement this result by showing that, if
$W$ is a closed subspace of a Banach space $V$ such that $W$ has a regular
basic sequence and $V\setminus W$ is $\aleph_{0}$-lineable, then

\begin{center}
$V\setminus W$ is $\left(  \alpha,\mathfrak{c}\right)  $-spaceable if, and
only if, $\alpha<\aleph_{0}$.
\end{center}

\noindent In particular, this assures that $\ell_{\infty}\setminus c_{0}$ and
$\ell_{\infty}\setminus c$ are $\left(  \alpha,\mathfrak{c}\right)
$-spaceable if, and only if, $\alpha<\aleph_{0}$.

We start off recalling some results concerning Schauder basis and basic sequences.

\begin{definition}
A sequence $\left(  e_{n}\right)  _{n=1}^{\infty}$ in a Banach space $V$ is a
basic sequence if it is a Schauder basis for $\overline{\operatorname*{span}%
\left\{  e_{n}:n\in\mathbb{N}\right\}  }$. Furthermore, as in \cite{Kalton}
and \cite{Shapiro}, a basic sequence is called regular if it is bounded away
from zero, that is, if it lies entirely outside some neighborhood of zero.
\end{definition}

The following lemmas are probably folklore, but we present their proofs for
the sake of completeness.

\begin{lemma}
\label{Lema3.3}Let $V$ be an infinite dimensional Banach space and let $W$ be
a finite-dimensional subspace of $V$. Then, given $\varepsilon\in\left(
0,1\right)  $, there exists $v\in V$ such that $\left\Vert v\right\Vert =1$
and
\[
\left\Vert w+\lambda v\right\Vert \geq\left(  1-\varepsilon\right)  \left\Vert
w\right\Vert
\]
for all $\left(  \lambda,w\right)  \in\mathbb{K}\times W$.
\end{lemma}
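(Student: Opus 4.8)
The plan is to build $v$ as a unit vector annihilated by finitely many norming functionals that ``cover'' the unit sphere of $W$, so that every $w\in W$ is detected by at least one of them. First I would reduce to the unit sphere $S_{W}=\left\{  w\in W:\left\Vert w\right\Vert =1\right\}$: by homogeneity it suffices to produce a unit vector $v$ with
\[
\left\Vert w+\lambda v\right\Vert \geq 1-\varepsilon\quad\text{for all }w\in S_{W}\text{ and all }\lambda\in\mathbb{K},
\]
since rescaling (replacing $w$ by $w/\left\Vert w\right\Vert$ and $\lambda$ by $\lambda/\left\Vert w\right\Vert$) then recovers the general inequality, the cases $w=0$ being trivial. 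The point of this reduction is that, because $\dim W<\infty$, the sphere $S_{W}$ is compact, which is exactly what will make a \emph{finite} selection of functionals possible.

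Next I would invoke the Hahn--Banach theorem: for each $w_{0}\in S_{W}$ choose $\varphi_{w_{0}}\in V^{\ast}$ with $\left\Vert \varphi_{w_{0}}\right\Vert =1$ and $\varphi_{w_{0}}(w_{0})=1$. By continuity of $\varphi_{w_{0}}$, the set $U_{w_{0}}=\left\{  w\in S_{W}:\left\vert \varphi_{w_{0}}(w)\right\vert >1-\varepsilon\right\}$ is open in $S_{W}$ and contains $w_{0}$, so the family $\left\{  U_{w_{0}}\right\}_{w_{0}\in S_{W}}$ is an open cover of $S_{W}$. Compactness yields $w_{1},\dots,w_{n}\in S_{W}$ with $S_{W}\subset\bigcup_{i=1}^{n}U_{w_{i}}$; write $\varphi_{i}:=\varphi_{w_{i}}$. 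I expect the conceptual obstacle to be precisely the realization that a \emph{single} norming functional cannot work once $\dim W\geq 2$ (any scalar functional has nontrivial kernel inside $W$), which is what forces the covering argument rather than a one-step construction.

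The decisive step is the choice of the direction $v$. The subspace $\bigcap_{i=1}^{n}\ker\varphi_{i}$ has codimension at most $n$ in $V$; since $V$ is infinite dimensional this intersection is itself infinite dimensional, hence nonzero, so I may pick $v$ with $\left\Vert v\right\Vert =1$ and $\varphi_{i}(v)=0$ for every $i$. This is the one place where infinite-dimensionality of $V$ is indispensable, and it is the technical heart of the argument (though it is elementary once the covering is in place). With $v$ so chosen, take any $w\in S_{W}$; there is an index $i$ with $w\in U_{w_{i}}$, and then for every $\lambda\in\mathbb{K}$, using $\left\Vert \varphi_{i}\right\Vert =1$ together with $\varphi_{i}(v)=0$,
\[
\left\Vert w+\lambda v\right\Vert \geq\left\vert \varphi_{i}(w+\lambda v)\right\vert =\left\vert \varphi_{i}(w)\right\vert >1-\varepsilon.
\]
Note that taking the modulus $\left\vert \varphi_{i}(\cdot)\right\vert$ handles the real and complex cases uniformly. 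The homogeneity rescaling then upgrades this to $\left\Vert w+\lambda v\right\Vert \geq(1-\varepsilon)\left\Vert w\right\Vert$ for all $(\lambda,w)\in\mathbb{K}\times W$, which is the assertion of the lemma.
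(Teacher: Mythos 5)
Your proof is correct and takes essentially the same approach as the paper's: compactness of the unit sphere of $W$, Hahn--Banach norming functionals, and a unit vector $v$ chosen in $\bigcap_{i=1}^{n}\ker\varphi_{i}$, which exists because $V$ is infinite dimensional. The only (cosmetic) difference is that you cover the sphere directly by the open sets $\left\{  w\in S_{W}:\left\vert \varphi_{w_{0}}\left(  w\right)  \right\vert >1-\varepsilon\right\}  $, so the final estimate is immediate, whereas the paper covers it by metric $\varepsilon$-balls around an $\varepsilon$-net and recovers the same bound with one extra triangle-inequality step.
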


\begin{proof}
Let $\mathbb{S}_{W}$ be the unit sphere of $W$. Since $\mathbb{S}_{W}$ is
compact, we can find a finite set $\left\{  w_{1},\ldots,w_{r}\right\}  $ in
$\mathbb{S}_{W}$ such that $\left\{  B_{\varepsilon}\left(  w_{k}\right)
:k=1,\ldots,r\right\}  $, where $B_{\varepsilon}\left(  w_{k}\right)  $ is the
open ball of center $w_{k}$ and radius $\varepsilon$, covers $\mathbb{S}_{W}$.
Let $V^{\ast}$ be the topological dual of $V$ and let us consider $w_{1}%
^{\ast},\ldots,w_{r}^{\ast}\in V^{\ast}$ such that, for every $k=1,\ldots,r$,
$\left\Vert w_{k}^{\ast}\right\Vert =1$ and $w_{k}^{\ast}\left(  w_{k}\right)
=1$. Since $V$ is infinite dimensional, $\bigcap\limits_{k=1}^{r}\ker
w_{k}^{\ast}$ is non-trivial subspace of $V$.

Let us fix a unit vector $v\in\bigcap\limits_{k=1}^{r}\ker w_{k}^{\ast}$.
Given $w\in W\setminus\left\{  0\right\}  $, there is $k\in\left\{
1,\ldots,r\right\}  $ such that%
\[
\left\Vert \dfrac{w}{\left\Vert w\right\Vert }-w_{k}\right\Vert <\varepsilon
\text{.}%
\]
For any $\lambda\in\mathbb{K}$, we have%
\begin{align*}
\dfrac{\left\Vert w+\lambda v\right\Vert }{\left\Vert w\right\Vert }  &
\geq\left\Vert w_{k}+\dfrac{\lambda v}{\left\Vert w\right\Vert }\right\Vert
-\left\Vert \dfrac{w}{\left\Vert w\right\Vert }-w_{k}\right\Vert \\
&  >\left\Vert w_{k}+\dfrac{\lambda v}{\left\Vert w\right\Vert }\right\Vert
-\varepsilon\\
&  \geq\left\vert w_{k}^{\ast}\left(  w_{k}+\dfrac{\lambda v}{\left\Vert
w\right\Vert }\right)  \right\vert -\varepsilon\text{ }\\
&  =\left\vert w_{k}^{\ast}(w_{k})\right\vert -\varepsilon\\
&  =1-\varepsilon\text{.}%
\end{align*}
Hence%
\[
\left\Vert w+\lambda v\right\Vert \geq\left(  1-\varepsilon\right)  \left\Vert
w\right\Vert \text{,}%
\]
for all $w\in W\setminus\left\{  0\right\}  $ and all scalars $\lambda$. Since
the case $w=0$ is immediate, the proof is done.
\end{proof}

\begin{lemma}
\label{Lema3.4}Let $V$ be an infinite dimensional Banach space. If
$v_{1},\ldots,v_{n}\in V$ are linearly independent with $\left\Vert
v_{i}\right\Vert =1$, for each $i=1,\dots,n$, then $V$ contains a basic
sequence $\left(  u_{k}\right)  _{k=1}^{\infty}$ where $u_{k}=v_{k}$ for each
$k=1,\ldots,n$.
\end{lemma}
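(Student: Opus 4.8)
The plan is to extend the finite linearly independent system $v_{1},\dots,v_{n}$ to an infinite basic sequence by an inductive construction driven by Lemma \ref{Lema3.3}, and then to invoke the classical Grunblum characterization of basic sequences. Recall that a sequence $\left(  u_{k}\right)  _{k=1}^{\infty}$ of nonzero vectors in a Banach space is a basic sequence if, and only if, there is a constant $K\geq 1$ such that
\[
\left\Vert \sum_{k=1}^{m}a_{k}u_{k}\right\Vert \leq K\left\Vert \sum_{k=1}^{p}a_{k}u_{k}\right\Vert
\]
for all integers $m\leq p$ and all scalars $a_{1},\dots,a_{p}$. Accordingly, the whole task reduces to producing a sequence starting with $v_{1},\dots,v_{n}$ for which this inequality holds with a \emph{uniform} constant.

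First I would set $u_{k}:=v_{k}$ for $k=1,\dots,n$ and fix a sequence $\left(  \varepsilon_{k}\right)  _{k>n}$ in $\left(  0,1\right)  $ with $\sum_{k>n}\varepsilon_{k}<\infty$, for instance $\varepsilon_{k}=2^{-k}$, so that $\prod_{k>n}\left(  1-\varepsilon_{k}\right)$ converges to a strictly positive number and $K_{0}:=\prod_{k>n}\left(  1-\varepsilon_{k}\right)  ^{-1}$ is finite. Proceeding inductively, suppose unit vectors $u_{1},\dots,u_{m}$ (with $m\geq n$) have been chosen; applying Lemma \ref{Lema3.3} to the finite-dimensional subspace $W_{m}:=\operatorname{span}\left\{  u_{1},\dots,u_{m}\right\}  $ and to $\varepsilon_{m+1}$ furnishes a unit vector $u_{m+1}$ with
\[
\left\Vert w+\lambda u_{m+1}\right\Vert \geq\left(  1-\varepsilon_{m+1}\right)  \left\Vert w\right\Vert
\]
for every $w\in W_{m}$ and every scalar $\lambda$. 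Taking $w=-\lambda u_{m+1}$ forces $u_{m+1}\notin W_{m}$, so the vectors stay linearly independent, and since $V$ is infinite-dimensional the process never stops, yielding a sequence $\left(  u_{k}\right)  _{k=1}^{\infty}$ of unit vectors whose first $n$ terms are exactly $v_{1},\dots,v_{n}$.

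Next I would verify the Grunblum estimate with a uniform constant. Choosing $w=\sum_{k=1}^{m}a_{k}u_{k}\in W_{m}$ and $\lambda=a_{m+1}$ above gives $\left\Vert \sum_{k=1}^{m+1}a_{k}u_{k}\right\Vert \geq\left(  1-\varepsilon_{m+1}\right)  \left\Vert \sum_{k=1}^{m}a_{k}u_{k}\right\Vert$, and iterating this from $m$ up to $p$ (for $n\leq m\leq p$) yields
\[
\left\Vert \sum_{k=1}^{m}a_{k}u_{k}\right\Vert \leq\Bigl(\prod_{j=m+1}^{p}\left(  1-\varepsilon_{j}\right)  ^{-1}\Bigr)\left\Vert \sum_{k=1}^{p}a_{k}u_{k}\right\Vert \leq K_{0}\left\Vert \sum_{k=1}^{p}a_{k}u_{k}\right\Vert .
\]
The only range not covered by this telescoping is the initial block $m\leq p\leq n$ (and the mixed case $m<n\leq p$). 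Here I would use that the finite linearly independent family $v_{1},\dots,v_{n}$ spans a finite-dimensional space, on which the coordinate projections are automatically bounded, so the Grunblum inequality holds on this block with some finite constant $C\geq 1$. Combining the two regimes (splitting the mixed case at index $n$) gives the uniform bound $K=C K_{0}<\infty$, and the criterion then certifies that $\left(  u_{k}\right)  _{k=1}^{\infty}$ is a basic sequence with the prescribed first $n$ terms.

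The step I expect to carry the real weight is keeping the constant uniform: the argument works precisely because the per-step losses $\left(  1-\varepsilon_{k}\right)  ^{-1}$ are chosen summably, so that $\prod_{k>n}\left(  1-\varepsilon_{k}\right)  $ stays bounded away from $0$. Lemma \ref{Lema3.3} supplies the geometric content at each stage, so beyond this summability choice the remaining verifications are routine bookkeeping; the one point demanding mild care is that the induction genuinely begins at the prescribed vectors $v_{1},\dots,v_{n}$, whence the separate finite-dimensional constant $C$ for the initial block must be folded into the final estimate.
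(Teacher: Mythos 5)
Your proposal is correct and follows essentially the same route as the paper's own proof: an inductive application of Lemma \ref{Lema3.3} with a summable choice of $\varepsilon_{k}$ (so the product $\prod\left(1-\varepsilon_{k}\right)^{-1}$ stays bounded), equivalence of norms on the finite-dimensional initial block $\operatorname{span}\left\{v_{1},\ldots,v_{n}\right\}$, splitting the mixed case at index $n$, and finally the Grunblum criterion, which the paper invokes implicitly as a ``well known'' fact. The only differences are cosmetic (your $\varepsilon_{k}=2^{-k}$ versus the paper's explicit sequence normalizing the product to $2$, and your naming of the criterion).
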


\begin{proof}
Let $W_{1}=\operatorname*{span}\left\{  v_{1},\ldots,v_{n}\right\}  $.
Considering the sequence $\left(  \varepsilon_{k}\right)  _{k=1}^{\infty}$
defined by%
\[
\varepsilon_{k}=\left(  \frac{10^{k}}{9}+%
{\textstyle\sum\limits_{j=0}^{k-1}}
10^{j}\right)  ^{-1}\text{,}%
\]
observe that%
\[
\left(
{\textstyle\prod\limits_{k=1}^{m}}
\left(  1-\varepsilon_{k}\right)  \right)  ^{-1}\leq%
{\textstyle\prod\limits_{k=1}^{\infty}}
\left(  1-\varepsilon_{k}\right)  ^{-1}=2
\]
for all $m$.

Define $u_{k}=v_{k}$ for each $k=1,\ldots,n$. By Lemma \ref{Lema3.3} there is
$u_{n+1}\in V$ such that $\left\Vert u_{n+1}\right\Vert =1$ and
\[
\left\Vert w+\lambda u_{n+1}\right\Vert \geq\left(  1-\varepsilon_{1}\right)
\left\Vert w\right\Vert
\]
for all $w\in W_{1}$ and $\lambda\in\mathbb{K}$, that is
\[
\left\Vert
{\textstyle\sum\limits_{k=1}^{n}}
\lambda_{k}u_{k}+\lambda u_{n+1}\right\Vert \geq\left(  1-\varepsilon
_{1}\right)  \left\Vert
{\textstyle\sum\limits_{k=1}^{n}}
\lambda_{k}u_{k}\right\Vert
\]
whenever $\lambda,\lambda_{1},\ldots,\lambda_{n}\in\mathbb{K}$. Defining
$W_{2}=\operatorname*{span}\left\{  u_{1},\ldots,u_{n},u_{n+1}\right\}  $, it
follows from Lemma \ref{Lema3.3} that there is $u_{n+2}\in V$ such that
$\left\Vert u_{n+2}\right\Vert =1$ and
\[
\left\Vert w+\lambda u_{n+2}\right\Vert \geq\left(  1-\varepsilon_{2}\right)
\left\Vert w\right\Vert
\]
for all $w\in W_{2}$ and $\lambda\in\mathbb{K}$, that is,
\[
\left\Vert
{\textstyle\sum\limits_{k=1}^{n+2}}
\lambda_{k}u_{k}\right\Vert \geq\left(  1-\varepsilon_{2}\right)  \left\Vert
{\textstyle\sum\limits_{k=1}^{n+1}}
\lambda_{k}u_{k}\right\Vert
\]
whenever $\lambda_{1},\ldots,\lambda_{n+2}\in\mathbb{K}$. Repeating this
process, we obtain a sequence $\left(  u_{k}\right)  _{k=1}^{\infty}$ in $V$
with $\left\Vert u_{k}\right\Vert =1$, for each $k\in\mathbb{N}$, such that
for each $N\geq1$ and any scalars $\left(  \lambda_{k}\right)  _{k=1}^{n+N}$,
\[
\left\Vert
{\textstyle\sum\limits_{k=1}^{n+N}}
\lambda_{k}u_{k}\right\Vert \geq\left(  1-\varepsilon_{N}\right)  \left\Vert
{\textstyle\sum\limits_{k=1}^{n+N-1}}
\lambda_{k}u_{k}\right\Vert \text{.}%
\]
Therefore,
\begin{align*}
\left\Vert
{\textstyle\sum\limits_{k=1}^{n+N-1}}
\lambda_{k}u_{k}\right\Vert  &  \leq%
{\textstyle\prod\limits_{k=1}^{m+1}}
\left(  1-\varepsilon_{N+k-1}\right)  ^{-1}\left\Vert
{\textstyle\sum\limits_{k=1}^{n+N+m}}
\lambda_{k}u_{k}\right\Vert \\
&  \leq2\left\Vert
{\textstyle\sum\limits_{k=1}^{n+N+m}}
\lambda_{k}u_{k}\right\Vert
\end{align*}
for each $m\in\mathbb{N}$. In particular,
\begin{equation}
\left\Vert
{\textstyle\sum\limits_{k=1}^{s}}
\lambda_{k}u_{k}\right\Vert \leq2\left\Vert
{\textstyle\sum\limits_{k=1}^{t}}
\lambda_{k}u_{k}\right\Vert \label{3.1}%
\end{equation}
for each $t\geq s\geq n$.

Now let us suppose $s\leq r\leq n$. Since the correspondence
\[%
{\textstyle\sum\limits_{k=1}^{n}}
\alpha_{k}u_{k}\mapsto%
{\textstyle\sum\limits_{k=1}^{n}}
\left\vert \alpha_{k}\right\vert
\]
defines a norm on $W_{1}$ and two norms are always equivalent in finite
dimensional spaces, there are positive constants $L$ and $M$, such that
\[
\left\Vert
{\textstyle\sum\limits_{k=1}^{n}}
\alpha_{k}u_{k}\right\Vert \leq L%
{\textstyle\sum\limits_{k=1}^{n}}
\left\vert \alpha_{k}\right\vert \leq M\left\Vert
{\textstyle\sum\limits_{k=1}^{n}}
\alpha_{k}u_{k}\right\Vert \text{.}%
\]
Hence,
\begin{equation}
\left\Vert
{\textstyle\sum\limits_{k=1}^{s}}
\lambda_{k}u_{k}\right\Vert \leq L%
{\textstyle\sum\limits_{k=1}^{s}}
\left\vert \lambda_{k}\right\vert \leq L%
{\textstyle\sum\limits_{k=1}^{r}}
\left\vert \lambda_{k}\right\vert \leq M\left\Vert
{\textstyle\sum\limits_{k=1}^{r}}
\lambda_{k}u_{k}\right\Vert \text{.} \label{3.2}%
\end{equation}
Combining the previous inequality with (\ref{3.1}) we have
\begin{equation}
\left\Vert
{\textstyle\sum\limits_{k=1}^{s}}
\lambda_{k}u_{k}\right\Vert \leq2M\left\Vert
{\textstyle\sum\limits_{k=1}^{t}}
\lambda_{k}u_{k}\right\Vert \text{,} \label{3.3}%
\end{equation}
for each $t\geq n$.

Finally, by (\ref{3.1}), (\ref{3.2}) and (\ref{3.3}), we conclude that, in
general, if $r,s\in\mathbb{N}$ are such that $s\leq r$, we have
\[
\left\Vert
{\textstyle\sum\limits_{k=1}^{s}}
\lambda_{k}u_{k}\right\Vert \leq C\left\Vert
{\textstyle\sum\limits_{k=1}^{r}}
\lambda_{k}u_{k}\right\Vert
\]
for a certain constant $C$. It is well known that it means that $\left(
u_{k}\right)  _{k=1}^{\infty}$ is a basic sequence.
\end{proof}

\begin{lemma}
\label{Lema3.5}\cite[Lemma 4.3]{Kalton} Let $W$ be a Banach space and $\left(
w_{k}\right)  _{k=1}^{\infty}$ be a regular basic sequence. Let $\left(
u_{k}\right)  _{n=1}^{\infty}$ be a sequence in $W$ such that $%
{\textstyle\sum\limits_{n=1}^{\infty}}
\left\Vert u_{k}\right\Vert <\infty$. If%
\[%
{\textstyle\sum\limits_{n=1}^{\infty}}
a_{k}\left(  w_{k}+u_{k}\right)  =0\Rightarrow a_{k}=0\text{,}%
\]
then $\left(  w_{k}+u_{k}\right)  _{n=1}^{\infty}$ is a basic sequence.
\end{lemma}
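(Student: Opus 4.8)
The plan is to show that the perturbed sequence $(w_k+u_k)_{k=1}^\infty$ satisfies the standard Grunblum/basis criterion: there is a constant $C>0$ such that for all scalars $(a_k)$ and all $s\le r$,
\[
\Bigl\Vert \sum_{k=1}^{s} a_k (w_k+u_k) \Bigr\Vert \le C \Bigl\Vert \sum_{k=1}^{r} a_k (w_k+u_k) \Bigr\Vert .
\]
Since it is given that the only vanishing linear combination of the $(w_k+u_k)$ is the trivial one, the sequence is linearly independent, and establishing this one-sided inequality (for a fixed $C$) is exactly what is needed to conclude that $(w_k+u_k)$ is a basic sequence, just as was used at the end of the proof of Lemma~\ref{Lema3.4}.

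\textbf{Key steps.} First I would record the quantitative input coming from $(w_k)$ being a \emph{regular} basic sequence: its basis constant $K$ gives the analogous inequality for the partial sums of $\sum a_k w_k$, and regularity supplies a lower bound $\inf_k \Vert w_k\Vert \ge \rho > 0$. Second, I would use $\sum_{k=1}^\infty \Vert u_k\Vert < \infty$ to pass to a tail: choose $N$ so large that $\sum_{k\ge N} \Vert u_k\Vert$ is small relative to $\rho$ and $K$ (a Neumann-series / small-perturbation threshold), so that on the closed span of $\{w_k : k\ge N\}$ the map $w_k \mapsto w_k+u_k$ differs from the identity by an operator of norm $<1$ and hence is an isomorphism onto its image; this forces $(w_k+u_k)_{k\ge N}$ to be basic with a controlled constant. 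Third, I would splice the finitely many initial terms back in: the first $N-1$ perturbed vectors are linearly independent (by the hypothesis on vanishing combinations), and adjoining a finite linearly independent block to a basic sequence preserves basicness, with only the basis constant being enlarged by a finite amount exactly as in the finite/infinite splitting argument of Lemma~\ref{Lema3.4} (inequalities (\ref{3.1})--(\ref{3.3})).

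\textbf{Where the real work lies.} The main obstacle is the perturbation estimate in the second step: I must bound a partial sum $\sum_{k=1}^{s} a_k(w_k+u_k)$ by $\sum_{k=1}^{r}a_k(w_k+u_k)$ \emph{without} a priori control on the individual coefficients $a_k$. The standard device is that the basis projections of $(w_k)$ give $\vert a_k\vert \Vert w_k\Vert \le 2K \Vert \sum_{j\le r} a_j w_j\Vert$, so $\vert a_k\vert \le (2K/\rho)\Vert \sum_{j} a_j w_j\Vert$ uniformly; then
\[
\Bigl\Vert \sum_{k} a_k u_k \Bigr\Vert \le \Bigl(\sup_k \vert a_k\vert\Bigr) \sum_k \Vert u_k\Vert \le \frac{2K}{\rho}\Bigl(\sum_k \Vert u_k\Vert\Bigr) \Bigl\Vert \sum_{j} a_j w_j \Bigr\Vert ,
\]
which lets me compare $\sum a_k(w_k+u_k)$ with $\sum a_k w_k$ up to a fixed multiplicative factor and transfer the basis inequality from $(w_k)$ to $(w_k+u_k)$. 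Tuning the tail index $N$ so that this factor stays strictly below one (e.g. $\frac{2K}{\rho}\sum_{k\ge N}\Vert u_k\Vert < \tfrac12$) is the crux; once that is in hand, the remaining pieces are the routine finite-dimensional splicing already carried out in Lemma~\ref{Lema3.4}.
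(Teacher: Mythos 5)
Your outline is the standard small-perturbation proof of this lemma, and your first two steps are sound: the coordinate estimate $\vert a_k\vert \le (2K/\rho)\Vert\sum_j a_j w_j\Vert$ coming from the basis constant $K$ and the regularity bound $\rho$, together with choosing $N$ so that $\tfrac{2K}{\rho}\sum_{k\ge N}\Vert u_k\Vert<\tfrac12$, do show that $(w_k+u_k)_{k\ge N}$ is basic. (For the record, the paper itself gives no proof of this statement; it is quoted from Kalton's paper, so there is no internal argument to compare against.) The genuine gap is in your third step. The principle you invoke --- that adjoining a finite linearly independent block to a basic sequence preserves basicness --- is false. If $x$ lies in $\overline{\operatorname{span}}\{y_k:k\ge 2\}$ for a basic sequence $(y_k)$ but not in its algebraic span, then $(x,y_2,y_3,\ldots)$ is linearly independent yet not basic, since $x-\sum_{k\ge2}c_ky_k=0$ is a nontrivial vanishing convergent combination. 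Translated to the present setting (take $u_1=-w_1+x$ and $u_k=0$ for $k\ge2$), this is exactly how the lemma fails when the displayed hypothesis is dropped. You use that hypothesis only to get \emph{finite} linear independence of the first $N-1$ perturbed vectors, which is strictly weaker than what the splice needs; and your appeal to the splitting inequalities (\ref{3.1})--(\ref{3.3}) of Lemma \ref{Lema3.4} does not transfer, because there the cross-block inequality (\ref{3.1}) was guaranteed by \emph{constructing} the tail vectors via Lemma \ref{Lema3.3}, whereas here the tail is given to you and no such inequality is available a priori.

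The repair is local but must use the hypothesis at full strength. Since the tail $(w_k+u_k)_{k\ge N}$ is basic, every element of its closed span is a convergent series $\sum_{k\ge N}b_k(w_k+u_k)$; if such an element equals a combination $\sum_{k<N}a_k(w_k+u_k)$, subtracting produces a vanishing convergent combination of the whole sequence, so the hypothesis forces all $a_k=b_k=0$. Hence
\[
\operatorname{span}\left\{  w_k+u_k:k<N\right\}  \cap\overline{\operatorname{span}}\left\{  w_k+u_k:k\ge N\right\}  =\left\{  0\right\}  \text{,}
\]
and since the first space is finite dimensional, the projection onto it along the second is bounded. That bounded projection is what yields the cross-block partial-sum estimate (partial sums ending in or before the block are controlled by longer partial sums), which combined with your tail estimate gives the Grunblum criterion for the full sequence. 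Without this intersection argument your proof does not go through; with it, it is complete.
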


Now we are able to state the main result of this section.

\begin{theorem}
\label{Teo3.6}Let $\lambda\geq\mathfrak{c}$, $V$ be a $\lambda$-dimensional
Banach space and $W$ be a closed vector subspace of $V$. If $V\setminus W$ is
$\aleph_{0}$-lineable and $W$ has a regular basic sequence, then $V\setminus
W$ is $\left(  n,\mathfrak{c}\right)  $-spaceable, for every $n\in\mathbb{N}$.
\end{theorem}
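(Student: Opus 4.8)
The plan is to fix an arbitrary $n$-dimensional subspace $W_{n}\subset\left(V\setminus W\right)\cup\left\{0\right\}$ and to exhibit a separable, infinite-dimensional \emph{closed} subspace $Z$ with $W_{n}\subset Z\subset\left(V\setminus W\right)\cup\left\{0\right\}$; since an infinite-dimensional separable Banach space has Hamel dimension exactly $\mathfrak{c}$, and $\mathfrak{c}=\dim Z\leq\dim V=\lambda$, such a $Z$ is precisely the witness demanded by the definition of $\left(n,\mathfrak{c}\right)$-spaceability. First I would record that $W_{n}\cap W=\left\{0\right\}$, because every nonzero vector of $W_{n}$ lies in $V\setminus W$. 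Let $q\colon V\to V/W$ denote the quotient map; as $W$ is closed, $V/W$ is a Banach space and $q$ is linear and continuous. The hypothesis that $V\setminus W$ is $\aleph_{0}$-lineable furnishes an infinite-dimensional subspace $E$ with $E\setminus\left\{0\right\}\subset V\setminus W$, so that $q|_{E}$ is injective and $V/W$ (hence $V$) is infinite-dimensional.

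The heart of the plan is the following reduction: \emph{it suffices to produce a basic sequence} $\left(b_{k}\right)_{k=1}^{\infty}$ \emph{in} $V$ \emph{with} $\operatorname*{span}\left\{b_{1},\dots,b_{n}\right\}=W_{n}$ \emph{and such that} $\left(q\left(b_{k}\right)\right)_{k=1}^{\infty}$ \emph{is a basic sequence in} $V/W$. Granting this, set $Z=\overline{\operatorname*{span}\left\{b_{k}:k\in\mathbb{N}\right\}}$, which is closed, separable, infinite-dimensional and contains $W_{n}$. To check $Z\cap W=\left\{0\right\}$, take $z\in Z\cap W$ and expand $z=\sum_{k}a_{k}b_{k}$ (legitimate, since a basic sequence is a Schauder basis of its closed span); applying $q$ termwise, which is valid by continuity, gives $0=q\left(z\right)=\sum_{k}a_{k}q\left(b_{k}\right)$, and the basicness of $\left(q\left(b_{k}\right)\right)$ forces every $a_{k}=0$, so $z=0$. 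This is the step where closedness and avoidance of $W$ are secured simultaneously and essentially for free, and it explains why the whole difficulty is concentrated into a single construction carried out in $V$ and $V/W$ at once.

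To build $\left(b_{k}\right)$ I would run a simultaneous version of the almost-orthogonalisation behind Lemmas \ref{Lema3.3} and \ref{Lema3.4}. In $V/W$, extend the linearly independent vectors $q\left(v_{1}\right),\dots,q\left(v_{n}\right)$ (where $v_{1},\dots,v_{n}$ is a basis of $W_{n}$) to a normalised basic sequence $\left(\overline{b}_{k}\right)_{k=1}^{\infty}$ by Lemma \ref{Lema3.4} applied in the infinite-dimensional Banach space $V/W$. Then I would lift this to a basic sequence of $V$: starting from $b_{1},\dots,b_{n}$ spanning $W_{n}$, and having produced $b_{1},\dots,b_{m}$ with $q\left(b_{k}\right)$ a nonzero multiple of $\overline{b}_{k}$ and $\left(b_{1},\dots,b_{m}\right)$ basic in $V$, I would select $b_{m+1}$ inside the coset $\left(\text{any lift of }\overline{b}_{m+1}\right)+W$ so that it is almost orthogonal to $\operatorname*{span}\left\{b_{1},\dots,b_{m}\right\}$, exactly as in Lemma \ref{Lema3.3}. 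The admissible moves here are additions of elements of $W$, which leave $q\left(b_{m+1}\right)$ unchanged; the infinite-dimensionality of $W$ and, crucially, its regular basic sequence $\left(w_{j}\right)$ provide enough such directions, and Lemma \ref{Lema3.5} (with ambient space $V$ and backbone a tail of $\left(w_{j}\right)$) certifies that the limiting sequence $\left(b_{k}\right)$ is genuinely basic.

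The hard part, and the only place where the full strength of the hypotheses is needed, is reconciling two opposing demands at each inductive step. On the one hand, to keep $\left(q\left(b_{k}\right)\right)$ basic in $V/W$ its terms must stay \emph{bounded away from zero}, so every $b_{m+1}$ is forced to carry a non-negligible image under $q$; on the other hand, the perturbation estimates behind Lemmas \ref{Lema3.3}--\ref{Lema3.5} call for \emph{small, summable} corrections. These are compatible precisely because the two kinds of adjustment are independent: the image $q\left(b_{m+1}\right)$ is fixed once and for all by the chosen $\overline{b}_{m+1}$, whereas the corrections that realise almost orthogonality and basicness take place inside $W=\ker q$ and therefore do not move the image at all. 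The regular basic sequence of $W$ is exactly what guarantees that these $W$-corrections can be performed while keeping the basis constants controlled, via Lemma \ref{Lema3.5}. Once both $\left(b_{k}\right)$ and $\left(q\left(b_{k}\right)\right)$ are basic, the reduction above closes the argument and yields the $\left(n,\mathfrak{c}\right)$-spaceability of $V\setminus W$ for every $n\in\mathbb{N}$.
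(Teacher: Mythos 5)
Your second paragraph (the reduction) is correct and is in essence what the paper does: its witness is $F=\overline{\operatorname{span}}\left\{ y_{k}+u_{k}:k\in\mathbb{N}\right\}$, where the quotient images $q\left(y_{k}+u_{k}\right)$ are nonzero multiples of a basic sequence in $V/W$, and $F\cap W=\left\{0\right\}$ is proved exactly by your expand-and-apply-$q$ argument. The genuine gap is in your construction of $\left(b_{k}\right)$, and it stems from a false premise: a basic sequence does \emph{not} have to be bounded away from zero. Multiplying the terms of a basic sequence by nonzero scalars leaves it basic (the partial-sum projections are unchanged), so there is no tension at all in letting the images $q\left(b_{k}\right)$ shrink to zero. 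Worse, your proposed resolution of the perceived tension is internally inconsistent: to invoke Lemma \ref{Lema3.5} with ``backbone a tail of $\left(w_{j}\right)$'' you need $b_{k}=w_{j_{k}}+u_{k}$ with $\sum_{k}\left\Vert u_{k}\right\Vert <\infty$, but if $q\left(b_{k}\right)=\overline{b}_{k}$ is kept normalized then, since $w_{j_{k}}\in W=\ker q$,
\[
\left\Vert u_{k}\right\Vert =\left\Vert b_{k}-w_{j_{k}}\right\Vert \geq\operatorname{dist}\left(b_{k},W\right)=\left\Vert q\left(b_{k}\right)\right\Vert _{V/W}=1
\]
for every $k$, so the summability hypothesis fails. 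The two adjustments are therefore not independent: the quotient norm is a lower bound for the distance from $b_{k}$ to \emph{anything} in $W$, so corrections inside $W$ can never make the perturbations small while the image stays normalized. In addition, your inductive step is unjustified as stated: Lemma \ref{Lema3.3} selects the new vector from the finite-codimensional subspace $\bigcap_{k=1}^{r}\ker w_{k}^{\ast}$, whereas you must select it from the coset $x+W$ of a lift; the affine system $w_{k}^{\ast}\left(x+w\right)=0$, $w\in W$, $k=1,\ldots,r$, need not be solvable (for instance if every $w_{k}^{\ast}$ vanishes on $W$ but not at $x$), so the coset may miss that subspace entirely.

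The repair is precisely to give up normalized images, which is the paper's route. Extend $\overline{x}_{k}=q\left(v_{k}\right)/\left\Vert q\left(v_{k}\right)\right\Vert _{V/W}$ to a normalized basic sequence $\left(\overline{x}_{k}\right)_{k=1}^{\infty}$ in $V/W$ by Lemma \ref{Lema3.4}; for $k>n$ choose a representative $x_{k}-w_{k}$ of $\overline{x}_{k}$ with $\left\Vert x_{k}-w_{k}\right\Vert \leq1+2^{-k}$ and set $u_{k}:=2^{-k}\left(x_{k}-w_{k}\right)$, while for $k\leq n$ set $u_{k}:=v_{k}-y_{k}$, so that $y_{k}+u_{k}=v_{k}$. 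Now $\sum_{k}\left\Vert u_{k}\right\Vert <\infty$; if $\sum_{k}a_{k}\left(y_{k}+u_{k}\right)=0$, the regularity of $\left(y_{k}\right)$ forces $a_{k}\rightarrow0$, hence $\sum_{k}a_{k}u_{k}$ converges absolutely and lies in $W$, and applying $q$ together with the basicness of $\left(\overline{x}_{k}\right)$ gives $a_{k}=0$ for all $k$; Lemma \ref{Lema3.5} then certifies that $\left(y_{k}+u_{k}\right)$ is basic, and your own reduction closes the argument with $Z=F$. Note that here $q\left(y_{k}+u_{k}\right)=2^{-k}\overline{x}_{k}\rightarrow0$, yet the quotient argument works perfectly because the coefficients against the fixed basic sequence $\left(\overline{x}_{k}\right)$ are just $2^{-k}a_{k}$ --- this is exactly the phenomenon your plan was designed to avoid, and avoiding it is what breaks your construction.
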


\begin{proof}
Let $Z$ be an $n$-dimensional subspace of $V$ such that
\[
Z\setminus\left\{  0\right\}  \subset V\setminus W\text{.}%
\]
Let $\left\{  v_{1},\ldots,v_{n}\right\}  $ be a basis of $Z$. Since $W$ is
closed, the quotient space $V/W$ is Banach when endowed with the norm
$\left\Vert \cdot\right\Vert _{V/W}$ given by $\left\Vert \overline
{v}\right\Vert _{V/W}=\inf\left\{  \left\Vert v-w\right\Vert :w\in W\right\}
$, where $\overline{v}$ denotes the equivalence class of $v\in V$. Let
$\left(  y_{k}\right)  _{k=1}^{\infty}$ be a regular basic sequence in $W$.
For each $i=1,\ldots,n$, let%
\begin{equation}
x_{i}=\frac{\left(  v_{i}-y_{i}\right)  }{\left\Vert \overline{v}%
_{i}\right\Vert _{V/W}}\text{.} \label{3.4}%
\end{equation}
Thus $\overline{x}_{1},\ldots,\overline{x}_{n}$ are linearly independent. In
fact, if
\[%
{\textstyle\sum\limits_{i=1}^{n}}
\lambda_{i}\overline{x}_{i}=\overline{0}\text{,}%
\]
then
\[
\overline{0}=%
{\textstyle\sum\limits_{i=1}^{n}}
\frac{\lambda_{i}}{\left\Vert \overline{v}_{i}\right\Vert _{V/W}}\overline
{v}_{i}=\overline{%
{\textstyle\sum\limits_{i=1}^{n}}
\frac{\lambda_{i}}{\left\Vert \overline{v}_{i}\right\Vert _{V/W}}v_{i}%
}\text{.}%
\]
This implies that
\[%
{\textstyle\sum\limits_{i=1}^{n}}
\frac{\lambda_{i}}{\left\Vert \overline{v}_{i}\right\Vert _{V/W}}v_{i}\in
W\cap Z=\left\{  0\right\}  \text{.}%
\]
Since $\left\{  v_{1},\ldots,v_{n}\right\}  $ is a basis of $Z$, we conclude
that%
\[
\lambda_{i}=0\text{, for each }i=1,\ldots,n\text{.}%
\]
By Lemma \ref{Lema3.4}, we obtain a normalized basic sequence $\left(
\overline{x}_{k}\right)  _{k=1}^{\infty}$ on $V/W$, with $x_{1},\ldots,x_{n}$
as in (\ref{3.4}). Since $\left\Vert \overline{x}_{k}\right\Vert _{V/W}%
:=\inf\left\{  \Vert x_{k}-w\Vert:w\in W\right\}  $, for every $k>n$, there is
$w_{k}\in W$ such that
\[
\left\Vert x_{k}-w_{k}\right\Vert \leq\left\Vert \overline{x}_{k}\right\Vert
_{V/W}+2^{-k}=1+2^{-k}\text{.}%
\]
Let
\[
u_{k}:=\left\{
\begin{array}
[c]{ll}%
\left\Vert \overline{v}_{k}\right\Vert _{V/W}x_{k}\text{,} & \text{if }k\leq
n\text{,}\vspace{0.2cm}\\
2^{-k}\left(  x_{k}-w_{k}\right)  \text{,} & \text{if }k>n\text{,}%
\end{array}
\right.
\]
and let $\left(  a_{k}\right)  _{k=1}^{\infty}$ be a sequence in $\mathbb{K}$
such that
\begin{equation}%
{\textstyle\sum\limits_{k=1}^{\infty}}
a_{k}\left(  y_{k}+u_{k}\right)  =0\text{.} \label{3.5}%
\end{equation}
Then, in particular,%
\begin{equation}
\lim_{k\rightarrow\infty}\left\Vert a_{k}\right\Vert \left\Vert y_{k}%
+u_{k}\right\Vert =0\text{.} \label{3.6}%
\end{equation}
Since $\left(  y_{k}\right)  _{k=1}^{\infty}$ is a regular basic sequence,
there is $L>0$ such that $\left\Vert y_{k}\right\Vert \geq L$ for each
$k\in\mathbb{N}$. Thus, if $k>n$, then%
\begin{equation}
\left\Vert y_{k}+u_{k}\right\Vert \geq\left\Vert y_{k}\right\Vert -\left\Vert
u_{k}\right\Vert \geq L-2^{-k}\left(  1+2^{-k}\right)  \overset{k\rightarrow
\infty}{\longrightarrow}L>0\text{.} \label{3.7}%
\end{equation}
From (\ref{3.6}) and (\ref{3.7}) we conclude that
\begin{equation}
\lim_{k\rightarrow\infty}a_{k}=0\text{.} \label{3.8}%
\end{equation}

The inequality%
\begin{equation}%
{\textstyle\sum\limits_{k=n+1}^{\infty}}
\left\Vert u_{k}\right\Vert =%
{\textstyle\sum\limits_{k=n+1}^{\infty}}
\left\Vert 2^{-k}\left(  x_{k}-w_{k}\right)  \right\Vert \leq%
{\textstyle\sum\limits_{k=n+1}^{\infty}}
2^{-k}\left(  1+2^{-k}\right)  <\infty\text{,} \label{3.9}%
\end{equation}

combined with (\ref{3.8}), allow us to conclude that $%
{\textstyle\sum\limits_{k=1}^{\infty}}
a_{k}u_{k}$ converges absolutely and, hence, converges. Therefore, by
(\ref{3.5}), we have%
\[%
{\textstyle\sum\limits_{k=1}^{\infty}}
a_{k}y_{k}=-%
{\textstyle\sum\limits_{k=1}^{\infty}}
a_{k}u_{k}\text{.}%
\]
Consequently, $%
{\textstyle\sum\limits_{k=1}^{\infty}}
a_{k}u_{k}\in W$. Thus,%
\[
\overline{0}=%
{\textstyle\sum\limits_{k=1}^{\infty}}
a_{k}\overline{u}_{k}=%
{\textstyle\sum\limits_{k=1}^{n}}
a_{k}\overline{u}_{k}+%
{\textstyle\sum\limits_{k=n+1}^{\infty}}
a_{k}\overline{u}_{k}=%
{\textstyle\sum\limits_{k=1}^{n}}
a_{k}\left\Vert \overline{v}_{k}\right\Vert _{V/W}\overline{x}_{k}+%
{\textstyle\sum\limits_{k=n+1}^{\infty}}
a_{k}2^{-k}\overline{x}_{k}%
\]
and, since $\left\{  \overline{x}_{k}:k\in\mathbb{N}\right\}  $ is a basic
sequence in $V/W$, it follows that
\begin{equation}
a_{k}=0 \label{3.10}%
\end{equation}
for each $k\in\mathbb{N}$. By (\ref{3.9}) and (\ref{3.10}) we can invoke Lemma
\ref{Lema3.5} to conclude that the sequence $\left(  y_{k}+u_{k}\right)
_{k=1}^{\infty}$ is a basic sequence in $V$. Defining the (norm) closure of
$\operatorname*{span}\left\{  y_{k}+u_{k}:k\in\mathbb{N}\right\}  $ by $F$,
let us to prove that $F\setminus\left\{  0\right\}  \subset V\setminus W$. If
$v\in W\cap F$, then there are scalars $c_{k}$ such that
\[
v=%
{\textstyle\sum\limits_{k=1}^{\infty}}
c_{k}\left(  y_{k}+u_{k}\right)
\]
and
\[
\overline{0}=\overline{v}=%
{\textstyle\sum\limits_{k=1}^{\infty}}
\overline{c_{k}\left(  y_{k}+u_{k}\right)  }={%
{\textstyle\sum\limits_{k=1}^{\infty}}
}c_{k}\overline{u}_{k}={%
{\textstyle\sum\limits_{k=1}^{\infty}}
}d_{k}\overline{x}_{k}\text{,}%
\]
where%
\[
d_{k}=\left\{
\begin{array}
[c]{ll}%
c_{k}\left\Vert \overline{v}_{k}\right\Vert _{V/W}\text{,} & \text{if }k\leq
n\text{,}\vspace{0.2cm}\\
c_{k}2^{-k}\text{,} & \text{if }k>n\text{.}%
\end{array}
\right.
\]
Since $\left(  \overline{x}_{k}\right)  _{k=1}^{\infty}$ is a basic sequence
in $V/W$, it follows that $d_{k}=c_{k}=0$ for all $k\in\mathbb{N}$ and, hence
$v=0$. Thus $W\cap F=\left\{  0\right\}  $, that is,
\[
F\setminus\left\{  0\right\}  \subset V\setminus W\text{.}%
\]
Since
\[
y_{k}+u_{k}=y_{k}+\left\Vert \overline{v}_{k}\right\Vert _{V/W}x_{k}%
=y_{k}+\left\Vert \overline{v}_{k}\right\Vert _{V/W}\frac{v_{k}-y_{k}%
}{\left\Vert \overline{v}_{k}\right\Vert _{V/W}}=v_{k}\text{,}%
\]
for all $k=1,\ldots,n$, we have $Z\subset F$ and the result is done, since
$\dim F=\mathfrak{c}$.
\end{proof}

The next result is a consequence of Corollary \ref{Cor2.3} and Theorem
\ref{Teo3.6}.

\begin{corollary}
\label{Cor3.7}If $W$ is a closed proper subspace of a Banach space $V$ such
that $W$ has a regular basic sequence and $V\setminus W$ is $\aleph_{0}%
$-lineable, then%
\[
V\setminus W\text{ is }\left(  \alpha,\mathfrak{c}\right)  \text{-spaceable
if, and only if, }\alpha<\aleph_{0}\text{.}%
\]

\end{corollary}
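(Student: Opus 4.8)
The plan is to read the biconditional as two separate implications and dispatch each by citing one of the two principal results already in hand: Theorem~\ref{Teo3.6} will give the ``if'' part, and Corollary~\ref{Cor2.3} will give the ``only if'' part. The statement itself advertises that it is a consequence of exactly these two results, so the real work is in checking that their hypotheses are met.

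For the implication ``$\alpha < \aleph_0 \Rightarrow V\setminus W$ is $(\alpha,\mathfrak{c})$-spaceable'', I would fix a positive integer $n$ with $\alpha = n$ and apply Theorem~\ref{Teo3.6}. The two hypotheses of that theorem concerning $W$ are granted verbatim: $V\setminus W$ is $\aleph_0$-lineable and $W$ has a regular basic sequence. The one hypothesis needing justification is the dimension requirement $\dim V \geq \mathfrak{c}$. Here I would observe that a regular basic sequence is, in particular, an infinite linearly independent family, so $W$ (and hence $V$) is infinite-dimensional; and an infinite-dimensional Banach space has Hamel dimension at least $\mathfrak{c}$, a standard consequence of the Baire category theorem. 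Thus $\lambda := \dim V \geq \mathfrak{c}$, Theorem~\ref{Teo3.6} applies, and $V\setminus W$ is $(n,\mathfrak{c})$-spaceable. Since $n$ is arbitrary, this settles every finite $\alpha$.

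For the reverse implication I would argue directly. Suppose $V\setminus W$ is $(\alpha,\mathfrak{c})$-spaceable. By definition this forces $V\setminus W$ to be $(\alpha,\mathfrak{c})$-lineable and, in particular, $\alpha$-lineable. Note that $W$ is non-trivial, being the closed span of (at least) a regular basic sequence, and proper by hypothesis. If one had $\alpha \geq \aleph_0$, then Corollary~\ref{Cor2.3} (with $X = V$ and $Y = W$) would declare that $V\setminus W$ is not $(\alpha,\beta)$-spaceable for any cardinal $\beta$, and in particular not $(\alpha,\mathfrak{c})$-spaceable, contradicting the hypothesis. Hence $\alpha < \aleph_0$, as required.

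Since the corollary is essentially an assembly of Theorem~\ref{Teo3.6} and Corollary~\ref{Cor2.3}, I do not anticipate a serious obstacle. The only genuinely non-formal point is the verification $\dim V \geq \mathfrak{c}$ needed to invoke Theorem~\ref{Teo3.6}; everything else is bookkeeping with the definitions, in particular the elementary observation that $(\alpha,\mathfrak{c})$-spaceability entails $\alpha$-lineability, which is precisely what makes Corollary~\ref{Cor2.3} applicable in the second half.
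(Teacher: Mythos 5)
Your proposal is correct and follows essentially the same route as the paper: Theorem~\ref{Teo3.6} gives the ``if'' direction and Corollary~\ref{Cor2.3} gives the ``only if'' direction, with the latter applied exactly as in the paper (spaceability entails $\alpha$-lineability, so Corollary~\ref{Cor2.3} yields the contradiction for $\alpha\geq\aleph_{0}$). Your one addition---checking $\dim V\geq\mathfrak{c}$ via the fact that an infinite-dimensional Banach space has Hamel dimension at least $\mathfrak{c}$---is a detail the paper's proof leaves implicit, and it is a legitimate and welcome verification of the dimensional hypothesis of Theorem~\ref{Teo3.6}.
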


\begin{proof}
Theorem \ref{Teo3.6} assures that $V\setminus W$ is $\left(  \alpha
,\mathfrak{c}\right)  $-spaceable, for every $\alpha<\aleph_{0}$. Conversely,
Corollary \ref{Cor2.3} guarantees that, if $\aleph_{0}\leq\alpha
\leq\mathfrak{c}$, then $V\setminus W$ is not $\left(  \alpha,\mathfrak{c}%
\right)  $-spaceable.
\end{proof}

Recall that a subspace $Y$ of a Banach space $X$ is
\textit{quasi-complemented} in $X$ if it is closed, and there exists a closed
linear subspace $Z$ of $X$ such that $Y\cap Z=\left\{  0\right\}  $ and $Y+Z$
is dense in $X$ (see \cite{Rosenthal, James, Lindenstrauss}). In
\cite{Lindenstrauss}, Lindenstrauss asks whether or not $c_{0}$ is
quasi-complemented in $\ell_{\infty}$, and, in \cite[Theorem 1.7]{Rosenthal},
Rosenthal shows that this is so. Furthermore, every separable subspace of
$\ell_{\infty}$ is quasi-complemented in $\ell_{\infty}$. In particular, if
$Y=c_{0}$ or $c$, the set $\ell_{\infty}\setminus Y$ is $\mathfrak{c}$-spaceable.

The next corollary characterizes the $\left(  \alpha,\mathfrak{c}\right)
$-spaceability of the sets $\ell_{\infty}\setminus c$ and $\ell_{\infty
}\backslash c_{0}$ giving more geometric contours to the Rosenthal's result:

\begin{corollary}
Both $\ell_{\infty}\setminus c$ and $\ell_{\infty}\setminus c_{0}$ are
$\left(  \alpha,\mathfrak{c}\right)  $-spaceable if, and only if,
$\alpha<\aleph_{0}$.
\end{corollary}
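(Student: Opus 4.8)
The plan is to derive this as a direct application of Corollary \ref{Cor3.7}. I would take $V=\ell_{\infty}$ and treat two parallel cases, $W=c$ and $W=c_{0}$. Since $\ell_{\infty}$ is a Banach space and both $c$ and $c_{0}$ are closed proper subspaces of $\ell_{\infty}$, it only remains to verify the two genuine hypotheses of Corollary \ref{Cor3.7}: that $W$ possesses a regular basic sequence, and that $\ell_{\infty}\setminus W$ is $\aleph_{0}$-lineable. Once both are checked, Corollary \ref{Cor3.7} yields exactly the stated equivalence for each choice of $W$.

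For the regular basic sequence, I would use the canonical unit vectors $\left(  e_{n}\right)  _{n=1}^{\infty}$, where $e_{n}$ carries a $1$ in the $n$-th coordinate and $0$ elsewhere. These vectors lie in $c_{0}\subset c$, are normalized with $\left\Vert e_{n}\right\Vert _{\infty}=1$ for all $n$ (hence bounded away from zero), and constitute the standard Schauder basis of $c_{0}$; in particular they form a Schauder basis of $\overline{\operatorname*{span}\left\{  e_{n}:n\in\mathbb{N}\right\}}=c_{0}$, so $\left(  e_{n}\right)  _{n=1}^{\infty}$ is a regular basic sequence. Viewed inside the Banach space $c$, the same sequence remains a basic sequence with unchanged closed span $c_{0}$, so both $c$ and $c_{0}$ contain a regular basic sequence, as required.

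For the $\aleph_{0}$-lineability of $\ell_{\infty}\setminus W$, I would invoke the discussion preceding the corollary: by Rosenthal's theorem \cite[Theorem 1.7]{Rosenthal} every separable subspace of $\ell_{\infty}$ is quasi-complemented, and consequently both $\ell_{\infty}\setminus c$ and $\ell_{\infty}\setminus c_{0}$ are $\mathfrak{c}$-spaceable. A $\mathfrak{c}$-dimensional closed subspace $S$ with $S\setminus\left\{  0\right\}  \subset\ell_{\infty}\setminus W$ contains, \emph{a fortiori}, a countably infinite-dimensional subspace, and therefore $\ell_{\infty}\setminus W$ is $\aleph_{0}$-lineable.

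With both hypotheses established, Corollary \ref{Cor3.7} applies to each of $W=c$ and $W=c_{0}$ and delivers the desired equivalence. The substance here lies entirely in verifying the hypotheses rather than in any fresh argument; the only nonroutine ingredient is the $\aleph_{0}$-lineability, which I expect to be the main (already resolved) difficulty, since it rests on Rosenthal's nontrivial quasi-complementation result instead of on an elementary construction. Everything else is a direct appeal to Corollary \ref{Cor3.7}.
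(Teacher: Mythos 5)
Your proposal is correct and follows essentially the same route as the paper: take $V=\ell_{\infty}$, $W=c_{0}$ or $c$, note that the canonical unit vectors form a regular basic sequence with closed span $c_{0}\subset c$, and apply Corollary \ref{Cor3.7}. The only difference is that you explicitly verify the $\aleph_{0}$-lineability hypothesis via Rosenthal's quasi-complementation result, a step the paper's proof leaves implicit (it is settled in the discussion immediately preceding the corollary), so your write-up is, if anything, slightly more complete.
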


\begin{proof}
The canonical sequence $\left(  e_{n}\right)  _{n=1}^{\infty}\subset
\ell_{\infty}$ is a Schauder basis for $c_{0}$. Hence $\left(  e_{n}\right)
_{n=1}^{\infty}$ is a basic sequence for $c$. Since $\left\Vert e_{n}%
\right\Vert =1$ for all $n\in\mathbb{N}$, the sequence $\left(  e_{n}\right)
_{n=1}^{\infty}$ is a regular basic sequence for both $c_{0}$ and $c$.
Moreover, $c_{0}$ and $c$ are both closed in $\ell_{\infty}$. Hence, if we
take $V=\ell_{\infty}$ and $W=c_{0}$ or $c$ in Corollary \ref{Cor3.7}, we
conclude that both $\ell_{\infty}\setminus c$ and $\ell_{\infty}\setminus
c_{0}$ are $\left(  \alpha,\mathfrak{c}\right)  $-spaceable if, and only if,
$\alpha<\aleph_{0}$.
\end{proof}

The following result is a consequence of the proof of Theorem \ref{Teo3.6}.

\begin{theorem}
Let $\lambda\geq\mathfrak{c}$, $V$ be a $\lambda$-dimensional Banach space and
$W$ be a closed vector subspace of $V$. If $V\setminus W$ is $\mathfrak{c}%
$-lineable and $W$ has a regular basic sequence $\left(  y_{k}\right)
_{k=1}^{\infty}$, then $V\setminus W$ is pointwise $\mathfrak{c}$-spaceable.
\end{theorem}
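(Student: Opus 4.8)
The plan is to recognize that pointwise $\mathfrak{c}$-spaceability of $V\setminus W$ is nothing but the case $n=1$ of the construction performed in the proof of Theorem \ref{Teo3.6}, now carried out starting from an arbitrary prescribed point rather than from a fixed finite-dimensional subspace. Recall that what must be produced is, for each $x\in V\setminus W$, a closed $\mathfrak{c}$-dimensional subspace $W_x$ with $x\in W_x\subset(V\setminus W)\cup\{0\}$. First I would fix $x\in V\setminus W$ and put $v_1:=x$, $Z:=\mathbb{K}x$. Since $W$ is a subspace and $x\notin W$, we have $Z\setminus\{0\}\subset V\setminus W$, which is exactly the input required by the argument of Theorem \ref{Teo3.6} with $n=1$.

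Next I would rerun that argument verbatim. Because $V\setminus W$ is $\mathfrak{c}$-lineable it is, in particular, $\aleph_0$-lineable, so there is an infinite-dimensional subspace $E$ with $E\cap W=\{0\}$; its image under the quotient map $V\to V/W$ is infinite dimensional, whence $V/W$ is infinite dimensional. This is the only property needed to apply Lemma \ref{Lema3.4} and extend the single normalized vector $\overline{x}_1$, with $x_1$ as in (\ref{3.4}), to a normalized basic sequence $(\overline{x}_k)_{k=1}^{\infty}$ in $V/W$. Using the regular basic sequence $(y_k)_{k=1}^{\infty}$ of $W$, I would define the vectors $u_k$ and consider a relation $\sum_{k}a_k(y_k+u_k)=0$ exactly as before; the estimates (\ref{3.7})--(\ref{3.9}) together with the basic-sequence computation leading to (\ref{3.10}) force all $a_k=0$, so Lemma \ref{Lema3.5} shows that $(y_k+u_k)_{k=1}^{\infty}$ is a basic sequence in $V$.

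Finally, I would set $W_x:=\overline{\operatorname*{span}\{y_k+u_k:k\in\mathbb{N}\}}$. The very same quotient argument that established $F\cap W=\{0\}$ in Theorem \ref{Teo3.6} shows $W_x\cap W=\{0\}$, i.e. $W_x\setminus\{0\}\subset V\setminus W$; moreover $W_x$ is an infinite-dimensional separable Banach space, so $\dim W_x=\mathfrak{c}$. Since $y_1+u_1=v_1=x$, we have $x\in W_x$, and $W_x$ is therefore the required closed $\mathfrak{c}$-dimensional subspace. As $x$ ranged over all of $V\setminus W$, this proves that $V\setminus W$ is pointwise $\mathfrak{c}$-spaceable.

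The only thing genuinely requiring attention --- and the place I expect to be the crux --- is verifying that no step of the Theorem \ref{Teo3.6} construction secretly relies on $n\geq2$, so that the specialization $n=1$ is legitimate. I expect this to be routine: the construction consumes exactly two structural hypotheses, namely that $V/W$ be infinite dimensional (for Lemma \ref{Lema3.4}) and that $W$ carry a regular basic sequence (for Lemma \ref{Lema3.5}), both of which persist here. In particular the $\mathfrak{c}$-lineability of $V\setminus W$ enters only through its consequence that $V/W$ is infinite dimensional, and no estimate beyond those already recorded in the proof of Theorem \ref{Teo3.6} is needed.
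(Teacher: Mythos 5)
Your proof is correct and is exactly the argument the paper intends: the paper offers no separate proof, stating only that the theorem ``is a consequence of the proof of Theorem \ref{Teo3.6}'', which is precisely your specialization $n=1$, $Z=\mathbb{K}x$ carried out for each $x\in V\setminus W$. Your check that the construction only consumes $\dim V/W\geq\aleph_{0}$ (supplied here by $\mathfrak{c}$-lineability) together with the regular basic sequence in $W$, and that $y_{1}+u_{1}=x$ lands the prescribed point in the closed span, is the right verification and it goes through.
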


\section{Dense lineability\label{Sec4}}

The following result was proved by Bernal et al. \cite{Bernal}:

\begin{theorem}
\label{Teo4.1}(\cite[Theorem 2.5]{Bernal}) Let $V$ be a metrizable separable
topological vector space and $W$ be a vector subspace of $V$. If $W$ has
infinite codimension, then $V\setminus W$ is $\aleph_{0}$-dense-lineable.
\end{theorem}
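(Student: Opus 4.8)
The plan is to construct, by a greedy induction, a sequence $(x_{n})_{n=1}^{\infty}$ in $V$ whose linear span $M:=\operatorname{span}\{x_{n}:n\in\mathbb{N}\}$ is dense in $V$, has dimension $\aleph_{0}$, and satisfies $M\cap W=\{0\}$; such an $M$ is a dense $\aleph_{0}$-dimensional subspace contained in $(V\setminus W)\cup\{0\}$, which is exactly $\aleph_{0}$-dense-lineability of $V\setminus W$. To set up, I would fix a metric $d$ inducing the topology of $V$ and, using separability, fix a countable dense set $\{d_{n}:n\in\mathbb{N}\}\subset V$. Let $q\colon V\to V/W$ be the algebraic quotient map; since $W$ has infinite codimension, $V/W$ is an infinite-dimensional vector space. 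The two requirements on $M$ translate cleanly through $q$: the condition $M\cap W=\{0\}$ is equivalent to $q$ being injective on $M$, i.e. to $(q(x_{n}))_{n=1}^{\infty}$ being linearly independent in $V/W$; and for density it suffices that $\{x_{n}:n\in\mathbb{N}\}$ itself be dense in $V$.

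I would run the induction as follows. Suppose $x_{1},\dots,x_{n-1}$ have been chosen so that $q(x_{1}),\dots,q(x_{n-1})$ are linearly independent, and set $U_{n-1}:=\operatorname{span}\{q(x_{1}),\dots,q(x_{n-1})\}$, a finite-dimensional — hence proper — subspace of $V/W$. Lifting a basis of $U_{n-1}$ to a finite-dimensional $F_{n-1}\subset V$, we have $q^{-1}(U_{n-1})=W+F_{n-1}$, and $q(W+F_{n-1})=U_{n-1}\subsetneq V/W$ shows that $W+F_{n-1}$ is a \emph{proper} subspace of $V$. The key observation is that a proper subspace of a topological vector space has empty interior (an interior point would, after translation, place a neighborhood of $0$ inside the subspace, forcing it to absorb all of $V$), so its complement $V\setminus(W+F_{n-1})$ is dense. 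Hence I can pick
\[
x_{n}\in V\setminus(W+F_{n-1})\quad\text{with}\quad d(x_{n},d_{n})<\tfrac{1}{n}.
\]
By construction $q(x_{n})\notin U_{n-1}$, so $q(x_{1}),\dots,q(x_{n})$ remain linearly independent and the induction continues.

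It remains to check density of $\{x_{n}\}$. Since $W$ has infinite codimension, $V$ is infinite-dimensional and hence (being a nontrivial Hausdorff TVS) has no isolated points; consequently every nonempty open set is infinite, and a dense sequence must meet each neighborhood of each point infinitely often. Thus, given $v\in V$ and $\varepsilon>0$, there are arbitrarily large $n$ with $d(d_{n},v)<\varepsilon/2$, and for such $n$ chosen also with $1/n<\varepsilon/2$ we get $d(x_{n},v)\le d(x_{n},d_{n})+d(d_{n},v)<\varepsilon$. Hence $\{x_{n}:n\in\mathbb{N}\}$ is dense, so $M=\operatorname{span}\{x_{n}\}$ is dense in $V$. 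Finally, linear independence of $(q(x_{n}))_{n=1}^{\infty}$ gives both $\dim M=\aleph_{0}$ and $M\cap W=\{0\}$, i.e. $M\setminus\{0\}\subset V\setminus W$, completing the argument. The one delicate point to get right is the simultaneous fulfilment of the two competing demands on $x_{n}$ — being within $1/n$ of $d_{n}$ (for density) while lying outside $W+F_{n-1}$ (to avoid $W$) — and this is precisely what the empty-interior/dense-complement remark resolves.
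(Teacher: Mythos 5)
Your proof is correct and matches the paper's approach: the paper quotes Theorem \ref{Teo4.1} from \cite{Bernal} without proof, but its own proof of the variant Theorem \ref{Teo4.3} is the same greedy induction, at each step choosing the next vector outside the subspace spanned by $W$ and the vectors already chosen, justified exactly as you justify it, namely that a proper subspace of a topological vector space has empty interior and hence dense complement. The only cosmetic difference is how density is arranged: the paper takes the $n$-th new vector inside the $n$-th member of a countable open base, making density of the chosen set immediate, whereas your dense-sequence-plus-$1/n$ device additionally needs --- and you correctly supply --- the observation that $V$ (being a nontrivial metrizable, hence Hausdorff, topological vector space) has no isolated points, so that indices $n$ with $d_{n}$ close to a given point occur arbitrarily late.
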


In this section, in some sense, we complement this result. We shall begin by
introducing the notion of $\left(  \alpha,\beta\right)  $-dense lineability:

\begin{definition}
Let $V$ be a $\lambda$-dimensional vector space endowed with a topology and
let\ $A$ be a non-empty subset of $V$. We say that $A$ is $\left(
\alpha,\beta\right)  $-dense lineable if it is $\alpha$\emph{-}lineable and
for each $\alpha$-dimensional subspace $W_{\alpha}\subset A\cup\left\{
0\right\}  $ there is a $\beta$-dimensional dense subspace $W_{\beta}$ of $V$
such that $W_{\alpha}\subset W_{\beta}\subset A\cup\left\{  0\right\}
$.\textrm{ }
\end{definition}

The main result of this section is a kind of variant of Theorem \ref{Teo4.1}:

\begin{theorem}
\label{Teo4.3}Let $V$ be a metrizable separable topological vector space and
let $W$ be a subspace of $V$. If $W$ has infinite codimension, then
$V\setminus W$ is $\left(  k,\aleph_{0}\right)  $-dense lineable, for each
$k\in\mathbb{N}$.
\end{theorem}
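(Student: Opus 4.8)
The plan is to reduce the statement to Theorem~\ref{Teo4.1} by combining two ingredients: the $\aleph_0$-dense-lineability already guaranteed by that theorem, and a careful enlargement argument that lets me absorb any prescribed finite-dimensional subspace $W_k \subset (V\setminus W)\cup\{0\}$ into a countable-dimensional dense subspace still contained in $(V\setminus W)\cup\{0\}$. Fix $k\in\mathbb{N}$ and let $W_k$ be an arbitrary $k$-dimensional subspace with $W_k\setminus\{0\}\subset V\setminus W$, i.e. $W_k\cap W=\{0\}$; I must produce an $\aleph_0$-dimensional dense subspace $W_{\aleph_0}$ of $V$ with $W_k\subset W_{\aleph_0}\subset (V\setminus W)\cup\{0\}$.

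First I would invoke Theorem~\ref{Teo4.1} to obtain a countable set $\{z_n:n\in\mathbb{N}\}$ spanning an $\aleph_0$-dimensional dense subspace $D$ of $V$ with $D\setminus\{0\}\subset V\setminus W$, so that $D\cap W=\{0\}$. The difficulty is that $D$ need not contain $W_k$, and naively adjoining a basis $\{v_1,\dots,v_k\}$ of $W_k$ can destroy the key disjointness-from-$W$ property: some nonzero combination $\sum \lambda_i v_i + \sum_j c_j z_{n_j}$ might land in $W$. The main obstacle is therefore to choose the countable spanning set compatibly with $W_k$, i.e. to ensure that $\operatorname*{span}(\{v_1,\dots,v_k\}\cup\{z_n\})$ meets $W$ only at $0$ while remaining dense. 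I would address this by perturbing: since $W$ has infinite codimension and $W_k\cap W=\{0\}$, the subspace $W\oplus W_k$ still has infinite codimension, so I can run Theorem~\ref{Teo4.1} on the subspace $W':=W\oplus W_k$ (which has infinite codimension and of course contains $W$) to get a countable dense family $\{z_n\}$ with each nonzero element of $\operatorname*{span}\{z_n\}$ lying outside $W'$, in particular outside $W$.

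With that choice in hand I set $W_{\aleph_0}:=\operatorname*{span}(\{v_1,\dots,v_k\}\cup\{z_n:n\in\mathbb{N}\})$. Density is inherited from $D=\operatorname*{span}\{z_n\}$, which is already dense, so $W_{\aleph_0}\supset D$ is dense; it is $\aleph_0$-dimensional and visibly contains $W_k$. The remaining point is that $W_{\aleph_0}\setminus\{0\}\subset V\setminus W$. A general element is $w_k+d$ with $w_k\in W_k$ and $d\in\operatorname*{span}\{z_n\}$; if it lay in $W$ then $d\in W - w_k\subset W\oplus W_k=W'$, forcing $d\in W'\cap\operatorname*{span}\{z_n\}=\{0\}$ by construction, hence $w_k\in W$ and thus $w_k\in W_k\cap W=\{0\}$, so the element is $0$. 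This shows $W_{\aleph_0}\cap W=\{0\}$, completing the verification.

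The step I expect to require the most care is the verification that $W'=W\oplus W_k$ genuinely has infinite codimension (so that Theorem~\ref{Teo4.1} applies to it): this is immediate because adding the finite-dimensional $W_k$ lowers the codimension by at most $k$, and $\aleph_0 - k = \aleph_0$. One should also check that Theorem~\ref{Teo4.1}, as stated, yields density of the constructed subspace in $V$ even when applied with the larger subspace $W'$ in the role of "$W$"; since that theorem produces an $\aleph_0$-dense-lineable set inside $V\setminus W'$ and density is a statement about $V$, this is exactly what is needed. No delicate estimates are involved beyond these cardinal-arithmetic and linear-algebra bookkeeping checks.
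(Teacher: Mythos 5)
Your proposal is correct, and it reaches the result by a genuinely more modular route than the paper. Both arguments turn on the same key observation---since $W_k$ is finite-dimensional and $W_k\cap W=\{0\}$, the subspace $W'=W\oplus W_k$ still has infinite codimension---but the paper does not invoke Theorem~\ref{Teo4.1} as a black box: it reruns the recursive construction behind that theorem, fixing a countable open basis $\{U_n:n\in\mathbb{N}\}$ of $V$ and choosing $g_{k+n+1}\in U_{n+1}\cap\left(V\setminus M_n\right)$, where $M_n=W\oplus\operatorname{span}\{g_1,\ldots,g_{k+n}\}$ is a proper subspace and hence has empty interior. That produces in one stroke a dense, $\aleph_0$-dimensional span which contains the prescribed subspace and meets $W$ only at $0$. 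You instead apply Theorem~\ref{Teo4.1} once, to $W'$, and then supply the extra linear-algebra step---correctly---showing that $(W_k\oplus D)\cap W=\{0\}$: if $w_k+d\in W$ then $d\in D\cap W'=\{0\}$, whence $w_k\in W_k\cap W=\{0\}$. Your reduction buys brevity and portability: Theorem~\ref{Teo4.3} becomes a formal corollary of Theorem~\ref{Teo4.1}, and the same trick upgrades any statement of the form ``the complement of an infinite-codimensional subspace is densely lineable'' to its $(k,\aleph_0)$ version. The paper's explicit construction buys self-containedness, and its template (interpolating a chosen vector into each basic open set while avoiding a growing proper subspace) is reused later, e.g.\ in the proof of Theorem~\ref{Teo5.3}. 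One small point worth a sentence in your write-up: the definition of $(k,\aleph_0)$-dense lineability also requires $V\setminus W$ to be $k$-lineable in the first place; this follows immediately from Theorem~\ref{Teo4.1} applied to $W$ itself, so no gap remains.
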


\begin{proof}
Fixing $k\in\mathbb{N}$, let $g_{1},\ldots,g_{k}$ be linearly independent
vectors such that
\[
G\setminus\left\{  0\right\}  \subset V\setminus W\text{,}%
\]
where $G:=\operatorname*{span}\left\{  g_{1},\ldots,g_{k}\right\}  $. Define
$M_{0}:=W\oplus G$. Let $\left\{  U_{n}:n\in\mathbb{N}\right\}  $ be a
countable open basis for the topology of $V$. Since $\operatorname{codim}%
W=\infty$ we have $M_{0}\subsetneq V$. This assures that $\operatorname{int}%
\left(  M_{0}\right)  =\varnothing$, and hence $V\setminus M_{0}$ is dense in
$V$. Let $g_{k+1}\in U_{1}\cap\left(  V\setminus M_{0}\right)  $ and define
$G_{1}=\operatorname*{span}\left\{  g_{1},\ldots,g_{k},g_{k+1}\right\}  $.
Since $\operatorname{codim}W=\infty$ we have $M_{1}:=W\oplus G_{1}\subsetneq
V$ and consequently $\operatorname{int}\left(  M_{1}\right)  =\varnothing$.
Therefore there is $g_{k+2}\in U_{2}\cap\left(  V\setminus M_{1}\right)  $.
Recursively, we obtain a countable set of vectors $\left\{  g_{k+1}%
,g_{k+2},\ldots,g_{k+n},\ldots\right\}  $ such that,
\[
g_{k+n+1}\in U_{n+1}\cap\left(  V\setminus M_{n}\right)  \text{,}%
\]
where
\[
M_{n}:=W\oplus G_{n}\text{ and }G_{n}:=\operatorname*{span}\left\{
g_{1},\ldots,g_{k},g_{k+1},\ldots,g_{k+n}\right\}  \text{.}%
\]
It is plain that $\left\{  g_{k+1},g_{k+2},\ldots,g_{k+n},\ldots\right\}  $ is
linearly independent and dense in $V$, consequently $\operatorname*{span}%
\left(  \left\{  g_{1},\ldots,g_{k+1},g_{k+2},\ldots,g_{k+n},\ldots\right\}
\right)  $ is $\aleph_{0}$-dimensional and dense in $V$.

If $v\in\operatorname*{span}\left(  \left\{  g_{1},\ldots,g_{k+1}%
,g_{k+2},\ldots,g_{k+n},\ldots\right\}  \right)  \setminus\left\{  0\right\}
$, then $v=\sum\limits_{i=1}^{k+N}a_{i}g_{i}$. Hence $v\in G_{N}$ and, since
$W\cap G_{N}=\left\{  0\right\}  $, we conclude that $v\notin W$. Therefore
\[
G\subset\operatorname*{span}\left(  \left\{  g_{1},\ldots,g_{k+1}%
,g_{k+2},\ldots,g_{k+n},\ldots\right\}  \right)  \subset\left(  V\setminus
W\right)  \cup\left\{  0\right\}  \text{.}%
\]

\end{proof}

It is well known (see \cite[Theorem 4.2]{Bernal}) that the set $L_{p}%
[0,1]\setminus\bigcup\nolimits_{q\in\left(  p,\infty\right)  }L_{q}[0,1]$ is
maximal dense-lineable, for each $p>0$. Considering $V=L_{p}[0,1]$ and
$W=\bigcup\nolimits_{q\in\left(  p,\infty\right)  }L_{q}[0,1]$ in Theorem
\ref{Teo4.3}, we have:

\begin{corollary}
Let $0<p<\infty$. The set $L_{p}[0,1]\setminus\bigcup\nolimits_{q\in\left(
p,\infty\right)  }L_{q}[0,1]$ is $\left(  n,\aleph_{0}\right)  $-dense
lineable, for each $n\in\mathbb{N}$.
\end{corollary}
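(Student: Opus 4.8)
The plan is to apply Theorem~\ref{Teo4.3} directly with $V=L_p[0,1]$ and $W=\bigcup_{q\in(p,\infty)}L_q[0,1]$. To do this I must verify the three hypotheses of that theorem: that $V$ is a metrizable separable topological vector space, that $W$ is genuinely a vector subspace of $V$, and that $W$ has infinite codimension in $V$. Once these are in place, the conclusion of Theorem~\ref{Teo4.3} gives exactly that $V\setminus W$ is $(n,\aleph_0)$-dense lineable for every $n\in\mathbb{N}$.

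First I would record that $L_p[0,1]$ is a metrizable separable topological vector space for every $p>0$. When $p\geq 1$ it is a separable Banach space, and when $0<p<1$ it is a separable $F$-space whose topology is induced by the complete translation-invariant metric $d(f,g)=\int_0^1|f-g|^p$. In either regime the hypothesis on $V$ holds. Next I would check that $W$ is a subspace. Since $[0,1]$ carries a finite measure, $L_q[0,1]\subseteq L_p[0,1]$ whenever $q>p$, so $W\subseteq V$. Closure under addition follows because the family $\{L_q[0,1]:q\in(p,\infty)\}$ is downward directed: given $f\in L_{q_1}$ and $g\in L_{q_2}$ with $q_1,q_2>p$, put $q=\min\{q_1,q_2\}>p$; then $f,g\in L_q$, hence $f+g\in L_q\subseteq W$. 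Closure under scalar multiplication is immediate, so $W$ is a vector subspace of $V$.

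The key step is to show that $W$ has infinite codimension. Here I would invoke the fact, recalled above from \cite{BFPS}, that $V\setminus W$ is $\aleph_0$-lineable (indeed $\mathfrak{c}$-lineable). Thus there is an infinite-dimensional subspace $E$ of $V$ with $E\setminus\{0\}\subset V\setminus W$, that is, $E\cap W=\{0\}$. Consequently the quotient map $V\to V/W$ is injective on $E$ and sends a basis of $E$ to a linearly independent set in $V/W$, so $\dim(V/W)\geq\dim E=\aleph_0$. Hence $W$ has infinite codimension, and all hypotheses of Theorem~\ref{Teo4.3} are satisfied.

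I expect the only point requiring care to be this last passage, namely deducing infinite codimension of $W$ from the $\aleph_0$-lineability of its complement; but this is precisely the elementary quotient argument just sketched, so no genuine obstacle remains. Everything else is bookkeeping: confirming the standard inclusion $L_q\subseteq L_p$ on a finite measure space and the separability and metrizability of $L_p[0,1]$. With these verifications complete, the corollary follows at once from Theorem~\ref{Teo4.3}.
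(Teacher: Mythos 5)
Your proposal is correct and takes essentially the same route as the paper: a direct application of Theorem~\ref{Teo4.3} with $V=L_{p}[0,1]$ and $W=\bigcup_{q\in(p,\infty)}L_{q}[0,1]$. The paper leaves the hypothesis checks implicit (it only cites the known dense-lineability from \cite{Bernal} as context), while you verify them explicitly --- the downward-directedness argument for $W$ being a subspace and the quotient-map argument deducing infinite codimension from the $\aleph_{0}$-lineability of $V\setminus W$ are both sound.
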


In \cite[Section 2]{Nestoridis}, Nestoridis posed the following question:%
\[
\text{Does\ the\ set}\emph{\ }\ell_{\infty}\setminus c_{0}\emph{\ }%
\text{contain\ a\ dense\ linear\ subspace?}%
\]

In \cite[Theorem 1.2]{a2} Papathanasiou gave a positive answer to this
question. More precisely, he proved that the set $\ell_{\infty}\setminus
c_{0}$ is maximal dense-lineable. Nonetheless, $\ell_{\infty}\setminus c_{0}$
is not $\left(  n,\aleph_{0}\right)  $-dense lineable, for any $n\in
\mathbb{N}$, according to the next result:

\begin{theorem}
Let $F=c_{0}$ or $c$ and $n$ be a positive integer. The set $\ell_{\infty
}\setminus$ $F$ is $\left(  n,\aleph_{0}\right)  $-lineable but it is not
$\left(  n,\aleph_{0}\right)  $-dense lineable.
\end{theorem}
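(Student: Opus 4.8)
The plan is to prove the two assertions separately: the $(n,\aleph_{0})$-lineability will follow painlessly from the positive results of Section \ref{Sec3}, while the failure of $(n,\aleph_{0})$-dense lineability will rest entirely on the non-separability of $\ell_{\infty}$. For the positive half, I would invoke Corollary \ref{Cor3.7} with $V=\ell_{\infty}$ and $W=F$: since $F=c_{0}$ or $c$ is a closed proper subspace whose canonical sequence $\left(e_{n}\right)_{n=1}^{\infty}$ is a regular basic sequence, and since $\ell_{\infty}\setminus F$ is $\aleph_{0}$-lineable, the hypotheses are met and, because $n<\aleph_{0}$, the set $\ell_{\infty}\setminus F$ is $\left(n,\mathfrak{c}\right)$-spaceable. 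In particular it is $n$-lineable.

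From $\left(n,\mathfrak{c}\right)$-spaceability I would then extract $(n,\aleph_{0})$-lineability by a trivial finite extension. Given any $n$-dimensional subspace $W_{n}\subset\left(\ell_{\infty}\setminus F\right)\cup\left\{0\right\}$, spaceability furnishes a closed $\mathfrak{c}$-dimensional subspace $W_{\mathfrak{c}}$ with $W_{n}\subset W_{\mathfrak{c}}\subset\left(\ell_{\infty}\setminus F\right)\cup\left\{0\right\}$. Since $\dim W_{\mathfrak{c}}=\mathfrak{c}>\aleph_{0}$, I complete a basis of $W_{n}$ by countably many further linearly independent vectors chosen inside $W_{\mathfrak{c}}$, obtaining an $\aleph_{0}$-dimensional subspace $W_{\aleph_{0}}$ with $W_{n}\subset W_{\aleph_{0}}\subset W_{\mathfrak{c}}\subset\left(\ell_{\infty}\setminus F\right)\cup\left\{0\right\}$. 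This yields the $(n,\aleph_{0})$-lineability.

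For the negative half, the decisive observation is the dichotomy between countable-dimensional subspaces and the ambient space. Every $\aleph_{0}$-dimensional normed space is separable: if $\left\{w_{k}:k\in\mathbb{N}\right\}$ is a Hamel basis of an $\aleph_{0}$-dimensional subspace $W$, then the countable set of finite linear combinations of the $w_{k}$ with coefficients in $\mathbb{Q}$ (respectively $\mathbb{Q}+i\mathbb{Q}$) is norm-dense in $W$, so $\overline{W}$ is separable. As $\ell_{\infty}$ is not separable, $\overline{W}$ is a proper subspace of $\ell_{\infty}$ and hence $W$ is not dense. Thus $\ell_{\infty}$ possesses no dense $\aleph_{0}$-dimensional subspace at all. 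Fixing any $n$-dimensional subspace $W_{n}\subset\left(\ell_{\infty}\setminus F\right)\cup\left\{0\right\}$, which exists by the $n$-lineability already established, no dense $\aleph_{0}$-dimensional subspace $W_{\aleph_{0}}$ with $W_{n}\subset W_{\aleph_{0}}\subset\left(\ell_{\infty}\setminus F\right)\cup\left\{0\right\}$ can be produced, so the ``for each $W_{n}$'' clause in the definition of $(n,\aleph_{0})$-dense lineability fails.

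Honestly, neither step presents a real obstacle: the first is a finite extension inside an already-constructed $\mathfrak{c}$-dimensional subspace, and the second turns on the elementary separability argument above. The only point that deserves care is the logical structure of the refutation, namely confirming that exhibiting a \emph{single} admissible $W_{n}$ for which no dense witness exists already negates the universally quantified definition; since \emph{no} dense $\aleph_{0}$-dimensional subspace of $\ell_{\infty}$ exists, this holds for every such $W_{n}$ and the conclusion is immediate.
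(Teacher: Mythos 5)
Your proof is correct, and its skeleton matches the paper's: the positive half comes from the Section \ref{Sec3} machinery (the paper invokes Theorem \ref{Teo3.6} directly, you route through Corollary \ref{Cor3.7}, which rests on the same theorem), and the negative half from the non-existence of countable-dimensional dense subspaces of $\ell_{\infty}$. The genuine difference lies in how that non-existence is established. The paper cites Papathanasiou's result \cite[Proposition 1.5]{a2}, that every dense linear subspace of $\ell_{\infty}$ has dimension exactly $\mathfrak{c}$, whereas you prove from scratch the weaker statement that actually suffices: an $\aleph_{0}$-dimensional subspace $W$ of a normed space is separable (finite rational-coefficient combinations of a countable Hamel basis are dense in $W$), so $\overline{W}$ is separable and cannot equal the non-separable space $\ell_{\infty}$; hence no $\aleph_{0}$-dimensional dense subspace exists at all. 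This makes your negative half self-contained and elementary, at the cost of not recording the sharper dimension count that the citation provides. You also make explicit a step the paper leaves implicit: extracting $\left(n,\aleph_{0}\right)$-lineability from $\left(n,\mathfrak{c}\right)$-spaceability by extending a basis of $W_{n}$ with countably many further independent vectors inside the $\mathfrak{c}$-dimensional subspace $W_{\mathfrak{c}}\subset\left(\ell_{\infty}\setminus F\right)\cup\left\{0\right\}$ --- this is precisely the intended reading of the paper's terse ``it follows from Theorem \ref{Teo3.6}.'' Finally, your handling of the quantifier structure is sound: since $\ell_{\infty}$ has no dense $\aleph_{0}$-dimensional subspace whatsoever, the universally quantified clause in the definition fails for every admissible $W_{n}$, and such a $W_{n}$ exists by the $n$-lineability already proved, so the set is indeed $n$-lineable yet not $\left(n,\aleph_{0}\right)$-dense lineable.
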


\begin{proof}
According to \cite[Proposition 1.5]{a2} the dimension of every dense linear
subspace of $\ell_{\infty}$ is $\mathfrak{c}$. This ensures that
\[
\ell_{\infty}\setminus F\text{ is not }\left(  n,\aleph_{0}\right)
\text{-dense lineable.}%
\]
On the other hand, it follows from Theorem \ref{Teo3.6} that
\[
\ell_{\infty}\setminus F\text{ is }\left(  n,\aleph_{0}\right)
\text{-lineable.}%
\]

\end{proof}

\section{Dense Pointwise Lineability\label{Sec5}}

In this section we introduce and explore the notion of pointwise
dense-lineability. This new concept is closely related to the notions of
pointwise lineability and dense-lineability (see \cite{AGSS, Aron2009}), but
it is of a stricter nature.

\begin{definition}
Let $\alpha\leq\lambda$ be cardinal numbers and let $V$ be a $\lambda
$-dimensional vector space endowed with a topology. Let\ $A$ be a non-void
subset of $V$. We say that $A$ is pointwise $\alpha$-dense-lineable if, for
each $x\in A$, there is an $\alpha$-dimensional dense subspace $W_{x}$ of $V$
such that
\[
x\in W_{x}\subset A\cup\left\{  0\right\}  \text{.}%
\]
When $\alpha=\lambda$, we say that $A$ is pointwise maximal dense-lineable.
\end{definition}

\begin{remark}
\label{Obs5.2}Let $V$ be an infinite dimensional metrizable separable
topological vector space and let $W$ be a nontrivial proper subspace of $V$.
In this case, for each $v\in V\setminus W$, we have $\mathbb{K}v\subset\left(
V\setminus W\right)  \cup\left\{  0\right\}  $. Hence, if $W$ has infinite
codimension, invoking Theorem \ref{Teo4.3} with $k=1$, we conclude that
$V\setminus W$ is pointwise $\aleph_{0}$-dense-lineable.
\end{remark}

In \cite[Theorem 2.5]{Bernal} the authors prove that if $V$ is a separable
metrizable topological vector space and $W$ is a subspace of $V$ with infinite
codimension, then $V\setminus W$ is $\aleph_{0}$-dense-lineable. The main
result of this section is a kind of complement of the aforementioned result:

\begin{theorem}
\label{Teo5.3}Let $V$ be an infinite dimensional separable metrizable
topological vector space and let $W$ be a subspace of $V$. If $W$ is dense in
$V$ and $V\setminus W$ is $\left(  m,\dim V\right)  $-lineable for some
$m\in\mathbb{N}$, then $V\setminus W$ is pointwise maximal dense-lineable.
\end{theorem}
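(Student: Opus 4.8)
The plan is to fix an arbitrary $x\in V\setminus W$ and to produce the required subspace $W_{x}$ in three stages: pin down the codimension of $W$, import density from Theorem \ref{Teo4.3}, and then inflate the dimension up to $\dim V$ by a transfinite construction that never meets $W$ off the origin. Write $\lambda:=\dim V$, and note first that, since $W$ is a subspace, $\mathbb{K}x\setminus\{0\}\subset V\setminus W$. Being $(m,\dim V)$-lineable, $V\setminus W$ is in particular $\dim V$-lineable, so there is a subspace $M\subset(V\setminus W)\cup\{0\}$ with $\dim M=\lambda$; as $M\cap W=\{0\}$, the quotient map $V\to V/W$ is injective on $M$, whence $\operatorname{codim}W=\dim(V/W)\geq\dim M=\lambda$, and therefore $\operatorname{codim}W=\lambda$. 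In particular $W$ has infinite codimension, since $\lambda\geq\aleph_{0}$ because $V$ is infinite dimensional.

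Since $V$ is separable metrizable and $W$ has infinite codimension, Theorem \ref{Teo4.3} applies with $k=1$ to the one-dimensional subspace $\mathbb{K}x\subset(V\setminus W)\cup\{0\}$, yielding an $\aleph_{0}$-dimensional dense subspace $D$ with $x\in\mathbb{K}x\subset D\subset(V\setminus W)\cup\{0\}$; in particular $D\cap W=\{0\}$. This $D$ already carries all the density we shall ever need, so it only remains to enlarge it to dimension $\lambda$ without touching $W$.

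The last stage is a transfinite recursion producing an increasing chain of subspaces $(S_{\xi})_{\xi\le\lambda}$ with $S_{0}=D$, each satisfying $S_{\xi}\cap W=\{0\}$. At a successor step, if $\dim S_{\xi}<\lambda$ then $(S_{\xi}+W)/W$ is a subspace of the $\lambda$-dimensional space $V/W$ of dimension $\dim S_{\xi}<\lambda$, so by the identity $\dim(V/W)=\dim\big((S_{\xi}+W)/W\big)+\dim\big(V/(S_{\xi}+W)\big)$ the quotient $V/(S_{\xi}+W)$ is again $\lambda$-dimensional; in particular $S_{\xi}+W\subsetneq V$, so I may choose $g_{\xi}\in V\setminus(S_{\xi}+W)$ and set $S_{\xi+1}:=S_{\xi}\oplus\mathbb{K}g_{\xi}$. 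A one-line check (if $s+c\,g_{\xi}\in W$ with $s\in S_{\xi}$ and $c\neq0$, then $g_{\xi}\in S_{\xi}+W$, a contradiction, forcing $c=0$ and then $s\in S_{\xi}\cap W=\{0\}$) shows $S_{\xi+1}\cap W=\{0\}$; at limit stages I take unions, which preserves both $S_{\xi}\cap W=\{0\}$ and density. The recursion cannot stall before $\dim S_{\xi}=\lambda$, so setting $W_{x}:=S_{\lambda}$ gives a subspace of dimension $\lambda=\dim V$ that is dense (it contains $D$), is contained in $(V\setminus W)\cup\{0\}$ (because $W_{x}\cap W=\{0\}$), and contains $x$. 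As $x$ was arbitrary, $V\setminus W$ is pointwise maximal dense-lineable.

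I expect the only delicate point to be the bookkeeping in the last paragraph: one must verify, via the cardinal arithmetic $\mu+\nu=\max(\mu,\nu)$ for infinite cardinals, that a subspace of dimension $<\lambda$ in the $\lambda$-dimensional quotient $V/W$ always has codimension $\lambda$ there, so that fresh vectors $g_{\xi}$ outside $S_{\xi}+W$ remain available at every stage short of $\lambda$. Everything else—the preservation of $S_{\xi}\cap W=\{0\}$ and of density along the chain—is routine.
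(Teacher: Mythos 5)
Your proof is correct, but its second half takes a genuinely different route from the paper's. Both arguments begin identically: apply Theorem \ref{Teo4.3} with $k=1$ to the line $\mathbb{K}x$ and obtain an $\aleph_{0}$-dimensional dense subspace $D$ with $x\in D\subset(V\setminus W)\cup\{0\}$ (you, unlike the paper, explicitly verify the infinite-codimension hypothesis of that theorem, by deducing $\operatorname{codim}W=\dim V$ from $\dim V$-lineability). From there the paper takes an $m$-dimensional $E\subset D$ containing $x$, invokes the full $(m,\dim V)$-lineability hypothesis to enlarge $E$ to a $\dim V$-dimensional subspace $Y\subset(V\setminus W)\cup\{0\}$ (which need not be dense), and then restores density by a metric perturbation: a basis of $Y$ is rescaled so that most of its vectors are small, and these vectors are translated by elements $u_{n}\in W$ approximating a countable dense subset of $V$ --- this is exactly where the density of $W$ and the separability of $V$ are used --- after which the span of the perturbed family is shown to be dense, of full dimension, disjoint from $W$ off the origin, and to contain $x$. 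You bypass the perturbation entirely: you enlarge the dense space $D$ itself to dimension $\lambda=\dim V$ by transfinite recursion, the codimension computation guaranteeing a fresh vector outside $S_{\xi}+W$ at every stage, with density inherited from $D$ for free. Your route is purely algebraic after Theorem \ref{Teo4.3} and uses strictly weaker hypotheses: the density of $W$ is never touched, and of the $(m,\dim V)$-lineability assumption you use only its consequence $\dim V$-lineability (equivalently, $\operatorname{codim}W=\dim V$); so you in fact prove a formally stronger statement, showing that those two hypotheses of Theorem \ref{Teo5.3} can be relaxed. What the paper's argument buys in exchange is the perturbation technique itself --- translating a rescaled linearly independent family by small vectors of a dense subspace while preserving independence and avoidance of $W$ --- which links the dense subspace to the subspace $Y$ supplied by the $(\alpha,\beta)$-lineability machinery and is reusable elsewhere; your argument replaces that tool by a standard transfinite exhaustion.
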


\begin{proof}
Consider $v\in V\setminus W$ and let $m\in\mathbb{N}$ be such that $V\setminus
W$ is $\left(  m,\dim V\right)  $-lineable. It follows from Remark
\ref{Obs5.2} that there is an $\mathbb{\aleph}_{0}$-dimensional dense vector
subspace $M$ of $V$ such that $v\in M\subset\left(  V\setminus W\right)
\cup\left\{  0\right\}  $. Consider a vector subspace $E\subset M$ with
dimension $m$ and containing $v$. Since $V\setminus W$ is $\left(  m,\dim
V\right)  $-lineable, there is a vector subspace $Y\subset V$, with $\dim
Y=\dim V$ and such that%
\[
E\subset Y\subset\left(  V\setminus W\right)  \cup\left\{  0\right\}  \text{.}%
\]
Since $v\in Y$, we conclude that $V\setminus W$ is pointwise $\dim\left(
V\right)  $-lineable. Now we need to prove that $V\setminus W$ is
pointwise\emph{ }maximal dense-lineable. Fix $x_{0}\in W\setminus\left\{
0\right\}  $ and let us denote by $d$ a translation-invariant metric that
defines the topology of $V$. Since $V$ is separable and $W$ is dense in $V$,
we may choose a dense countable set $\left\{  p_{1},p_{2},\ldots\right\}  $ in
$V$\ and a countable set $\left\{  w_{n}:n\in\mathbb{N}\right\}  $ in $W$ such
that%
\begin{equation}
d\left(  p_{n},w_{n}\right)  \overset{n\rightarrow\infty}{\longrightarrow
}0\text{.} \label{5.1}%
\end{equation}
Note that (\ref{5.1}) yields the density of $\left\{  w_{n}:n\in
\mathbb{N}\right\}  $ in $V$. Define $x_{n}:=n^{-1}x_{0}$ and
\[
u_{n}:=\left\{
\begin{array}
[c]{ll}%
0\text{,} & \text{if }n=1\text{,}\vspace{0.2cm}\\
w_{n}+x_{n}\text{,} & \text{if }n>1\text{.}%
\end{array}
\right.
\]
Thus, if $n\geq2$, then%
\begin{equation}
d\left(  u_{n},w_{n}\right)  =d\left(  w_{n}+x_{n},w_{n}\right)  =d\left(
x_{n},0\right)  \overset{n\rightarrow\infty}{\longrightarrow}0\text{.}
\label{5.2}%
\end{equation}

Consider a basis $\left\{  y_{a}:a\in\Lambda\right\}  $ of $Y$. Since
$\Lambda$ is infinite, we may write $\Lambda=\bigcup_{n=1}^{\infty}\Lambda
_{n}$ (union of pairwise disjoint and infinite sets). Without loss of
generality we may assume that $v=y_{b}$, for some $b\in\Lambda_{1}$. Using the
continuity of the scalar multiplication once more, given $n\geq2$, for each
$a\in\Lambda_{n}$ there is $\lambda_{a}\in\mathbb{K}\setminus\left\{
0\right\}  $ such that $d\left(  \lambda_{a}y_{a},0\right)  \leq n^{-1}$. Let
us define%
\[
z_{a}:=\left\{
\begin{array}
[c]{ll}%
y_{a}\text{,} & \text{if }a\in\Lambda_{1}\text{,}\vspace{0.2cm}\\
\lambda_{a}y_{a}\text{,} & \text{if }a\in\Lambda\setminus\Lambda_{1}\text{.}%
\end{array}
\right.
\]
Hence, obviously, $\left\{  z_{a}:a\in\Lambda\right\}  $ is also a basis of
$Y$. For each $n\in\mathbb{N}$, let $\Gamma_{n}=\left\{  z_{a}+u_{n}%
:a\in\Lambda_{n}\right\}  $. Now, for each $n\in\mathbb{N}$, let us choose
$z_{a_{n}}+u_{n}\in\Gamma_{n}$. Observe that, if $n\geq2$, then, by
(\ref{5.1}) and (\ref{5.2}), we have%
\begin{align*}
d\left(  z_{a_{n}}+u_{n},p_{n}\right)   &  \leq d\left(  z_{a_{n}}+u_{n}%
,u_{n}\right)  +d\left(  u_{n},w_{n}\right)  +d\left(  w_{n},p_{n}\right) \\
&  =d\left(  z_{a_{n}},0\right)  +d\left(  u_{n},w_{n}\right)  +d\left(
w_{n},p_{n}\right)  \overset{n\rightarrow\infty}{\longrightarrow}0\text{.}%
\end{align*}
This means, in particular, that, $\Gamma=\bigcup_{n=1}^{\infty}\Gamma_{n}$ is
a dense subset of $V$ and so $\operatorname*{span}\left(  \Gamma\right)  $ is
dense in $V$.

Now we will show that $\Gamma$ is linearly independent. Suppose that there
exist $m\in\mathbb{N}$, $a_{1}\in\Lambda_{n_{1}},\ldots,a_{m}\in\Lambda
_{n_{m}}$ (with $a_{i}\neq a_{j}$ whenever $i\neq j$) and $\left(  \lambda
_{1},\ldots,\lambda_{m}\right)  \in\mathbb{K}^{m}\setminus\left\{  0\right\}
$ such that
\[%
{\textstyle\sum\limits_{i=1}^{m}}
\lambda_{i}\left(  z_{a_{i}}+u_{n_{i}}\right)  =0\text{.}%
\]
Observe that%
\[%
{\textstyle\sum\limits_{i=1}^{m}}
\lambda_{i}z_{a_{i}}\in Y\subset V\setminus W\text{ \ \ \ \ and \ \ \ \ }%
{\textstyle\sum\limits_{i=1}^{m}}
\lambda_{i}u_{n_{i}}\in W\text{.}%
\]
Since
\[%
{\textstyle\sum\limits_{i=1}^{m}}
\lambda_{i}z_{a_{i}}=-%
{\textstyle\sum\limits_{i=1}^{m}}
\lambda_{i}u_{n_{i}}%
\]
it follows that%
\[%
{\textstyle\sum\limits_{i=1}^{m}}
\lambda_{i}z_{a_{i}}\in Y\cap W=\left\{  0\right\}
\]
and so $\lambda_{1}=\cdots=\lambda_{m}=0$.

This same argument allows us to show that%
\[
\operatorname*{span}\left(  \Gamma\right)  \subset\left(  V\setminus W\right)
\cup\left\{  0\right\}  \text{.}%
\]
Since $v=z_{b}+u_{1}$, we conclude that $v\in\operatorname*{span}\left(
\Gamma\right)  $.
\end{proof}

\medskip

In \cite[Theorem 3.2]{FPT} the authors proved that the set $L_{p}\left[
0,1\right]  \setminus\bigcup\nolimits_{q\in\left(  p,\infty\right)  }%
L_{q}\left[  0,1\right]  $ is $\left(  1,\mathfrak{c}\right)  $-spaceable.
Considering $V=L_{p}[0,1]$ and $W=$ $\bigcup\nolimits_{q\in\left(
p,\infty\right)  }L_{q}\left[  0,1\right]  $ in Theorem \ref{Teo5.3}, since
$W$ is dense in $V$, we have:

\begin{corollary}
$L_{p}\left[  0,1\right]  \setminus\bigcup\nolimits_{q\in\left(
p,\infty\right)  }L_{q}\left[  0,1\right]  $ is pointwise maximal
dense-lineable, for every $p>0$.
\end{corollary}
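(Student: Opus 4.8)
The plan is to obtain the statement as a direct application of Theorem \ref{Teo5.3}, with the choices $V=L_p[0,1]$ and $W=\bigcup_{q\in(p,\infty)}L_q[0,1]$. Thus the task reduces to checking that this pair $(V,W)$ meets the four hypotheses of that theorem — that $V$ is an infinite dimensional separable metrizable topological vector space, that $W$ is a subspace, that $W$ is dense in $V$, and that $V\setminus W$ is $(m,\dim V)$-lineable for some $m\in\mathbb{N}$ — after which the conclusion "pointwise maximal dense-lineable" is immediate.

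First I would record the structural facts about $V$. For $p\geq 1$ the space $L_p[0,1]$ is an infinite dimensional separable Banach space, while for $0<p<1$ it is an infinite dimensional separable $F$-space under the translation-invariant metric $d(f,g)=\int_0^1|f-g|^p$; in either case $V$ is an infinite dimensional separable metrizable topological vector space, and, being a separable infinite dimensional complete metrizable space, its algebraic dimension is $\dim V=\mathfrak{c}$. This takes care of the ambient-space hypotheses and fixes the target dimension.

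Next I would verify that $W$ is a dense subspace. Since $[0,1]$ carries a finite measure, the inclusions $L_q[0,1]\subset L_{q'}[0,1]$ hold whenever $q\geq q'>p$, so the family $\{L_q : q>p\}$ is nested; given $f\in L_{q_1}$ and $g\in L_{q_2}$ with $q_1,q_2>p$ one has $f+g\in L_{\min(q_1,q_2)}\subset W$, and $W$ is plainly stable under scalar multiplication, so $W$ is a linear subspace of $V$. Density follows because $L_\infty[0,1]\subset W$ and $L_\infty[0,1]$ is dense in $L_p[0,1]$ (simple functions already being dense), whence $W$ is dense in $V$.

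Finally I would supply the lineability hypothesis and conclude. By \cite[Theorem 3.2]{FPT}, quoted immediately before the corollary, the set $V\setminus W$ is $(1,\mathfrak{c})$-spaceable, hence in particular $(1,\mathfrak{c})$-lineable; since $\dim V=\mathfrak{c}$, this is exactly the assertion that $V\setminus W$ is $(m,\dim V)$-lineable with $m=1$. All hypotheses of Theorem \ref{Teo5.3} now being in force, that theorem yields that $V\setminus W$ is pointwise maximal dense-lineable, as claimed. The only point requiring any genuine care — and it is routine — is the verification that the union $W$ is truly a subspace and is dense; the remaining inputs are either standard properties of $L_p$ or are imported verbatim from cited results, so no real obstacle arises in this argument.
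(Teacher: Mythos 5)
Your proposal is correct and follows exactly the paper's route: the paper proves this corollary by the single observation that, taking $V=L_p[0,1]$ and $W=\bigcup_{q\in(p,\infty)}L_q[0,1]$, the set $V\setminus W$ is $(1,\mathfrak{c})$-spaceable by \cite[Theorem 3.2]{FPT} and $W$ is dense in $V$, so Theorem \ref{Teo5.3} applies. Your additional verifications (that $W$ is a subspace via the nestedness of the $L_q$ scale on a finite measure space, density via $L_\infty\subset W$, and $\dim V=\mathfrak{c}$) are accurate fillings-in of details the paper leaves implicit.
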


\end{document}